\newtheorem{dfn}{Definition}[section]
\newtheorem{thm}{Theorem}[section]
\newtheorem{cor}{Corollary}[section]
\newtheorem{ex}{Example}[section]
\newtheorem{prp}{Proposition}[section]
\newtheorem{rem}{Remark}[section]
\newtheorem{lem}{Lemma}[section]
\newcommand\Res{\mathop{\mathrm{Res}}\nolimits}
\DeclareMathOperator{\Z}{\mathbb{Z}}
\DeclareMathOperator{\R}{\mathbb{R}}
\DeclareMathOperator{\C}{\mathbb{C}}
\DeclareMathOperator{\N}{\mathbb{N}}
\title{Braid zeta function and some formulae for the torus type\footnote{$\mathbf{Keywords}$ : braid group, dynamical zeta function, knot theory.}}
\author{Kentaro Okamoto\footnote{$\mathbf{2010}$ $\mathbf{Mathematics}$ $\mathbf{Subject}$ $\mathbf{Classification}$ :  20F36,  37C30 }}
\date{\empty}
\begin{document}
\maketitle
\begin{abstract}

There is a well-known zeta function of the $\Z$-dynamical system generated by an element of the symmetric group. By considering this zeta function as a model, 
we can construct a new zeta function of an element of the braid group. 
 In this paper, we show that the Alexander polynomial which is the most classical polynomial invariant of knots can be expressed in terms of this braid zeta function.
\hspace{0.2em}Furthermore we define the function $Z_q$ associated with some braids. We show that this function can be expressed by some braid zeta function for the case of special braids whose closures are isotopic to certain torus knots.
\end{abstract}
\section{Introduction}

\hspace{1.5em}Let $\mathrm{S}_n$ be the symmetric group acting on the finite set $X_n:=\{1,2,\ldots,n\}$. Then, for any permutation $\sigma \in \mathrm{S}_n$,  the zeta function of $\sigma$ is defined as
\begin{align*}
\zeta_{\sigma}(s):=\exp\biggl\{\sum_{m=1}^{\infty}\frac{\# \mathrm{Fix}(\sigma^m,X_n)}{m}s^m\biggr\},
\end{align*}
where $\mathrm{Fix}(\sigma^m,X_n):=\{x\in X_n \mid \sigma^m x=x \}$.
In \cite[Proposition.1]{siryo7}, the following interesting proposition is shown by Kim, Koyama and Kurokawa.
\begin{thm}
For any permutation $\sigma \in \mathrm{S}_n$, the zeta function $\zeta_{\sigma}(s)$ has the following properties.\\
$(1)$ Let $\mathrm{Cycle(\sigma)}$ be the set of primitive cycles of $\sigma$, and $l(P)$ be the length of cycle $P\in \mathrm{Cycle(\sigma)}$. Then, $\zeta_{\sigma}(s)$ has the Euler product:
\begin{align*}
\zeta_{\sigma}(s) = \prod_{P\in \mathrm{Cycle}(\sigma)}\frac{1}{1-s^{l(P)}}.
\end{align*}
$(2)$ Let $p_n : \mathrm{S}_n \longrightarrow \mathrm{GL}_n (\Z)$ be the permutation representation. Then, $\zeta_{\sigma}(s)$ has the following determinant expression:
\begin{align*}
\zeta_{\sigma}(s) = \mathrm{det}(I_n - p_n (\sigma)s)^{-1}.
\end{align*}
$(3)$ $\zeta_{\sigma}(s)$ satisfies the following functional equation:
\begin{align*}
\zeta_{\sigma}(s) = \mathrm{sgn}(\sigma)(-s)^{-n}\zeta_{\sigma}(1/s),
\end{align*}
where $\mathrm{sgn}:\mathrm{S}_n \longrightarrow \set{\pm1}$ is the signature of the permutation.\\
$(4)$ $\zeta_{\sigma}(e^{-s})$ satisfies an analogue of the Riemann hypothesis: all poles of $\zeta_{\sigma}(e^{-s})$ satisfy
\begin{align*}
\mathrm{Re}(s)=0.
\end{align*}
\end{thm}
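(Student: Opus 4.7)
The plan is to reduce all four parts to the primitive cycle decomposition of $\sigma$ and derive them in the order stated; only $(1)$ requires genuinely combinatorial input, while $(2)$--$(4)$ will follow from the Euler product by essentially formal manipulations. The only step that requires real care is the sign bookkeeping in $(3)$, and I regard the Euler product in $(1)$ as the main substantive content of the theorem.

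For $(1)$, I would first note that $X_n$ is the disjoint union of the supports of the primitive cycles $P \in \mathrm{Cycle}(\sigma)$, and that a point lying on a cycle of length $l$ is fixed by $\sigma^m$ precisely when $l \mid m$. This gives the cycle-by-cycle count $\#\mathrm{Fix}(\sigma^m, X_n) = \sum_{P} l(P)\,\mathbf{1}_{l(P) \mid m}$. Substituting into the defining series and reindexing $m = k\,l(P)$ collapses the inner sum to $-\log(1 - s^{l(P)})$, and exponentiation then yields the stated Euler product.

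For $(2)$, I would use that under a suitable reordering of $X_n$ the matrix $p_n(\sigma)$ is block-diagonal, with one cyclic-shift block $C_{l(P)}$ per primitive cycle. Since $\det(I_l - C_l s) = 1 - s^l$ for the cyclic shift $C_l$ (immediate from expansion along the last column), the full determinant factors as $\prod_P (1 - s^{l(P)})$, matching the inverse of the Euler product from $(1)$. A cleaner alternative is to apply the identity $\log\det(I - As) = -\sum_{m \geq 1} \Tr(A^m)\,s^m/m$ to $A = p_n(\sigma)$ and use $\Tr(p_n(\sigma)^m) = \#\mathrm{Fix}(\sigma^m, X_n)$, which recovers $(2)$ directly from the definition of $\zeta_\sigma(s)$.

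For $(3)$, starting from the Euler product each factor rewrites as $(1 - s^{-l(P)})^{-1} = -s^{l(P)}(1 - s^{l(P)})^{-1}$, so $\zeta_\sigma(1/s) = (-1)^{|\mathrm{Cycle}(\sigma)|}\, s^n\, \zeta_\sigma(s)$ using $\sum_P l(P) = n$. Combining this with the classical identity $\mathrm{sgn}(\sigma) = (-1)^{n - |\mathrm{Cycle}(\sigma)|}$ yields the stated functional equation; the only subtlety is combining the two sign contributions correctly. Finally, for $(4)$ the Euler product shows that the poles of $\zeta_\sigma(s)$ occur exactly at the roots of unity $s$ with $s^{l(P)} = 1$ for some $P$. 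Under $s = e^{-w}$ these solutions are $w \in (2\pi i/l(P))\,\Z$, which lies on the imaginary axis, giving $\mathrm{Re}(w) = 0$ and completing the analogue of the Riemann hypothesis.
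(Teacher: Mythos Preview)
Your proof is correct in all four parts. However, the paper does not actually supply its own proof of this theorem: the statement is quoted as background from Kim--Koyama--Kurokawa \cite{siryo7} (``In \cite[Proposition~1]{siryo7}, the following interesting proposition is shown\ldots''), and the paper's own work begins with the braid-group analogue in Theorem~1.2. So there is no paper proof to compare against directly.

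That said, your argument is the standard one and is essentially what the cited source does. One minor remark: for part~(4) you can, as you do, read off the poles from the Euler product, but the paper's proof of the analogous statement for the braid zeta function (Theorem~1.2(3)) proceeds instead via unitarity of the representation with respect to a suitable Hermitian form. In the symmetric-group case this specializes trivially, since permutation matrices are orthogonal and hence have eigenvalues on the unit circle; your Euler-product argument and the unitarity argument are therefore two equally short routes to the same conclusion here, though only the latter generalizes to the Burau setting.
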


We call a permutation $\sigma \in \mathrm{S}_n$ {\it simple cycle} if $\sigma$ is a primitive cycle in itself.
Then, for a simple cycle $\sigma \in \mathrm{S}_n$, the zeta function $\zeta_{\sigma}(s)$ has a simple pole at $s=1$ and we obtain
\begin{equation}
\underset{s=1}{\Res} \hspace{0.2em}\zeta_{\sigma}(s) = \lim_{s \to 1}(s-1)(1-s^n)^{-1}=-\frac{1}{n}.
\end{equation}
Remark that the residue of $\zeta_{\sigma}(s)$ at $s=1$ gives us only the information of the length of $\sigma$.

Our first goal is to generalize such properties to the case of the braid group. Consequently, we generalize $\zeta_{\sigma}(s)$ to the zeta function of a braid by using the Burau representation of the braid group. As an application the Alexander polynomial $\Delta_{K}(q)$ which is the most classical invariant of knots can be expressed by the residue of this new zeta function. This is analogous to the fact that the residue of the Dedekind  zeta function at $s=1$ has invariants of an algebraic field such as the class number, discriminant and regulator. Noting that the Burau representation is $q$-deformation of the permutation representation, we obtain a fact that $\Delta_{K}(1)=1$.

We first recall the notations and settings on the braid group briefly. We refer to \cite{siryo3}, \cite{siryo6} and \cite{siryo12} for more details. 

Let us denote the braid group on $n$ strands by $\mathrm{B}_n$. It is known that $\mathrm{B}_n$ has the following presentation:
\begin{align*}
\mathrm{B}_n:=\langle \sigma_i \ (1\leq i \leq n-1) \mid \sigma _i\sigma _j=\sigma _j\sigma _i \ (|i-j|\geq 2), \sigma _i\sigma _{i+1}\sigma _i=\sigma _{i+1}\sigma _i\sigma _{i+1} \ (1\leq i \leq n-2) \rangle.
\end{align*} 
The generator $\sigma_i$ can be identified with the crossing between the $i$-th and $(i+1)$-st strands as Figure $1$ (see \cite[Theorem1.8]{siryo3}) , 
\begin{figure}[!h]
\begin{center}
\begin{tabular}{c}
\begin{minipage}{0.50\hsize}
\begin{center}
\includegraphics[,width=4.3cm]{./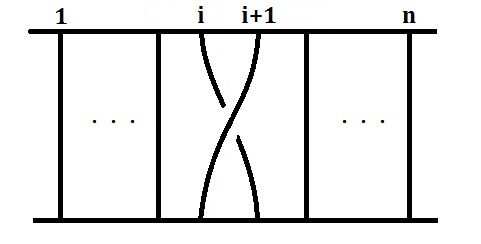} 
\caption{generator $\sigma_i$}
\end{center}
\end{minipage}
\begin{minipage}{0.50\hsize}
\begin{center}
\includegraphics[,width=5.0cm]{./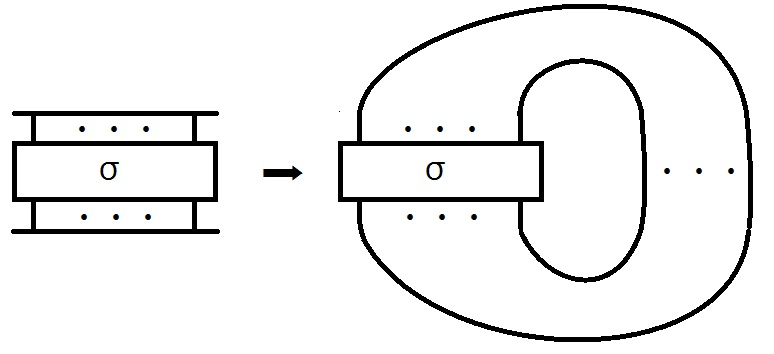} 
\caption{the closure of a braid $\sigma$}
\end{center}
\end{minipage}
\end{tabular}
\end{center}
\end{figure}
and the multiplication of generators implies that the braid obtained by attaching the generators from the top to the bottom.
The ${\it closure}$ of a braid is the link obtained from the braid by connecting upper ends and lower ends as Figure $2$. The closure of $\sigma$ is denoted by $\hat{\sigma}$. 
Let $\beta_{n,q}$ be the ${\it Burau} \hspace{0.3em} {\it representation}$, which is defined by 
\begin{equation}
\label{01}
\beta_{n,q}(\sigma_i) := I_{i-1}\oplus \left(
\begin{array}{ccc}
1-q & 1 \\
q & 0 \\
\end{array}\right) \oplus I_{n-i-1} \in \mathrm{GL}_n (\Lambda).
\end{equation}
Here $\Lambda:= \Z[q^{\pm 1}]$. We also define the  braid zeta function of $\sigma \in \mathrm{B}_n$ by the determinant expression:
\begin{equation}
\zeta(s,\sigma ; \beta_{n,q})=\mathrm{det}(I_n-\beta_{n,q}(\sigma)s)^{-1}.
\end{equation}
We now can state the main result of this paper.
\begin{thm}
$(1)$ For $\sigma \in \mathrm{B}_n$, $\zeta(s,\sigma ;\beta_{n,q})$ satisfies the functional equation:
\begin{align*}
\zeta(s,\sigma ;\beta_{n,q})=\mathrm{sgn}_q(\sigma^{-1})(-s)^{-n}\zeta(1/s,\sigma^{-1} ;\beta_{n,q}).
\end{align*}
Here $\mathrm{sgn}_q(\sigma):=\mathrm{det}(\beta_{n,q}(\sigma))$.\\
$(2)$ If $\hat{\sigma}$ is a knot, the residue of $\zeta(s,\sigma ;\beta_{n,q})$ at $s=1$ is given as follows:
\begin{align*}
\underset{s=1}{\Res}\hspace{0.2em} \zeta(s,\sigma; \beta_{n,q}) = -\frac{1}{[n]_q}\Delta_{\hat{\sigma}}(q)^{-1}.
\end{align*}
Here $\Delta_{\hat{\sigma}}(q)$ is the Alexander polynomial of a knot  $\hat{\sigma}$, and $[n]_q$ is the $q$-integer defined by
\begin{align*}
[n]_q := \frac{1-q^n}{1-q}.
\end{align*}
$(3)$ Assume that $q$ is a point of the unit circle on the complex plane, in other words, $q$ is expressed by $e^{i\theta}(\theta \in \R) $, and that the argument of $q$ satisfies $|\theta|<{2\pi}/n$. Then for any $\sigma \in \mathrm{B}_n$, the braid zeta function of $\sigma$ satisfies an analogue of Riemann hypothesis: all poles of $\zeta(e^{-s},\sigma ;\beta_{n,q})$ satisfy
\begin{align*}
\mathrm{Re}(s)=0.
\end{align*}
\end{thm}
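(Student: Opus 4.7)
Setting $A := \beta_{n,q}(\sigma)$, the functional equation follows from the one-line determinant identity $\det(I_n - As) = (-s)^n \det(A)\det(I_n - A^{-1}s^{-1})$, obtained by factoring $-sA$ out of the left-hand side. Because $\beta_{n,q}$ is a homomorphism we have $A^{-1} = \beta_{n,q}(\sigma^{-1})$ and $\det(A)^{-1} = \mathrm{sgn}_q(\sigma^{-1})$, so inverting both sides of the identity turns it directly into the asserted functional equation.

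\textbf{Plan for (2).} The first step is the observation that the vector $v_0 := (1,q,q^2,\dots,q^{n-1})^T$ is fixed by every generator $\beta_{n,q}(\sigma_i)$ (a direct check on the $2\times 2$ Burau block), hence by $\beta_{n,q}(\sigma)$ for all $\sigma \in \mathrm{B}_n$. Choosing a $\beta_{n,q}$-invariant complement $V_0$ of the line through $v_0$ realises the reduced Burau representation $\bar\beta_{n,q}$ on $V_0$, giving the factorisation $\det(I_n - \beta_{n,q}(\sigma)s) = (1-s)\det(I_{n-1} - \bar\beta_{n,q}(\sigma)s)$. Under the assumption that $\hat\sigma$ is a knot, $1$ is not an eigenvalue of $\bar\beta_{n,q}(\sigma)$, so the pole at $s=1$ is simple and the residue of $\zeta(s,\sigma;\beta_{n,q})$ equals $-\det(I_{n-1}-\bar\beta_{n,q}(\sigma))^{-1}$. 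To finish, I invoke the classical Burau formula for braid closures, $\det(I_{n-1}-\bar\beta_{n,q}(\sigma)) = [n]_q\,\Delta_{\hat\sigma}(q)$, and substitute.

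\textbf{Plan for (3), and the main obstacle.} Poles of $\zeta(e^{-s},\sigma;\beta_{n,q})$ sit precisely at $s=\log\lambda$, where $\lambda$ runs over the eigenvalues of $\beta_{n,q}(\sigma)$, so the Riemann hypothesis reduces to showing every such $\lambda$ has modulus one whenever $q=e^{i\theta}$ with $|\theta|<2\pi/n$. My plan is to exploit the Squier Hermitian form: when $q$ lies on the unit circle, the Burau matrices preserve an explicit sesquilinear form $H_q$ on the representation space, and a signature computation — most transparent on the reduced factor $\bar\beta_{n,q}$, where the diagonal entries of $H_q$ are essentially $\sin(k\theta)\sin((k+1)\theta)/\sin\theta$-type quantities — shows that $H_q$ is positive definite precisely in the window $|\theta|<2\pi/n$. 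Once positive definiteness is in hand, $\beta_{n,q}(\sigma)$ is genuinely unitary with respect to an inner product and all its eigenvalues satisfy $|\lambda|=1$; together with the always-present eigenvalue $1$ on $v_0$ (contributing the pole $s=0$), every pole lies on $\mathrm{Re}(s)=0$. The delicate step, and the chief obstacle, is the signature analysis of $H_q$: pinpointing that the positivity threshold is exactly $|\theta|=2\pi/n$ — and not merely some unspecified neighbourhood of $\theta=0$ — is where the bulk of the work will concentrate.
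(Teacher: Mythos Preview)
Your plan is correct and follows essentially the same route as the paper: the functional equation via the determinant identity, the residue via the decomposition $\beta_{n,q} = \mathbf{1} \oplus \beta^r_{n,q}$ together with Burau's formula $\det(I_{n-1}-\beta^r_{n,q}(\sigma)) = [n]_q\,\Delta_{\hat\sigma}(q)$, and the Riemann hypothesis via Squier's Hermitian form and its positive-definiteness range. The only differences are cosmetic: the paper realises the decomposition by an explicit conjugating matrix $C_{n,q}$ rather than your fixed-vector argument, and for part~(3) it computes the eigenvalues of the tridiagonal Squier matrix $\Omega^r_n$ directly (they come out as $2\cos(\theta/2) - 2\cos(\pi j/n)$, $j=1,\dots,n-1$) rather than using a Sylvester/principal-minor calculation, but either method pins down the exact threshold $|\theta| < 2\pi/n$.
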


Remark that $\zeta(s,\sigma; \beta_{n,q})$ does not have the Euler product expression, however, $(1)$ and $(3)$ are analogous to Theorem $1.1$. Furthermore, $(2)$ is the characteristic property of the braid zeta function. We prove Theorem $1.2$ in the next section. As corollaries of Theorem $1.2$, we obtain the generating function expression of $\zeta(s, \sigma; \beta_{n,q})$ for $\sigma \in \mathrm{B}_n$. 

Moreover, in the last section, for two braids $b, b'$ we introduce the function $Z_q(s, b,b')$ which is a $q$-analogue of $\zeta(s, \sigma \otimes \tau)$ for two permutations $\sigma, \tau$ (see \cite{siryo7}). Then we prove some formulae of the residue of the function $Z_q$ at $s=1$ for the special braid whose closure is isotopic to the torus knot.

\section{The proof of Theorem\ 1.2}
Before the proof of Theorem $1.2$, we introduce the notion of representation zeta function.
\begin{dfn}\normalfont
\label{def2.1}
Let $G$ be a group and a pair $(\rho,V)$ be a finite-dimensional representation of $G$. Then, for an element $g \in G$, we define the zeta function as
\begin{align*}
\zeta(s, g ;\rho):=\mathrm{det}(I_n - \rho (g)s)^{-1},
\end{align*}
where $n$ is the dimension of $V$. This function is called the ${\it representation} \hspace{0.3em}{\it zeta}\hspace{0.3em} {\it function}$ of $g \in G$.
\end{dfn}
From Definition $\ref{def2.1}$, we can say that the braid zeta function is a kind of representation zeta function.
Moreover, using the permutation representation $p_n$, we have
\begin{align*}
\zeta_{\sigma}(s)=\zeta(s, \sigma ; p_n)
\end{align*}
for $\sigma \in \mathrm{S}_n$. Hence, we can regard $\zeta_{\sigma}(s)$ as a kind of the representation zeta function.

There is a canonical projection $\pi_n : \mathrm{B}_n \longrightarrow \mathrm{S}_n$ defined by
\begin{align*}
\pi_n (\sigma_i):=(i, i+1).
\end{align*}
Since the following diagram is commutative :
\begin{equation}
\begin{CD}
\mathrm{B}_n @>\beta_{n, q} >>\mathrm{GL}_n(\Lambda) \\
@V\pi_n VV @VVq \to 1 V \\
\mathrm{S}_n @>p_n>>\mathrm{GL}_n(\Z)
\end{CD}
\end{equation}
Therefore, we immediately have
\begin{equation}
\lim_{q \to 1}\zeta(s,\sigma ; \beta_{n,q}) = \zeta_{\pi_n(\sigma)}(s).
\end{equation}
Furthermore, we can say that the following diagram is also commutative :
\begin{align*}
\begin{CD}
\mathrm{B}_n @>\mathrm{sgn}_q >>(-q)^{\Z} \\
@V\pi_n VV @VVq \to 1 V \\
\mathrm{S}_n @>\mathrm{sgn}>>\Set{\pm1} 
\end{CD}
\end{align*}

\begin{proof}[Proof of Theorem $1.2$]
$(1)$ By the definition of braid zeta function, 
\begin{align*}
\zeta(s,\sigma ; \beta_{n,q})=\det(I_n-\beta_{n,q}(\sigma)s)^{-1}
&=\det(\beta_{n,q}(\sigma)s)^{-1}(-1)^n\det(I_n-\beta^{-1}_{n,q}(\sigma)s^{-1})^{-1}\\
&=\mathrm{sgn}_q (\sigma^{-1})(-s)^{-n}\zeta(s^{-1},\sigma^{-1} ; \beta_{n,q}).
\end{align*}
$(2)$ It is well-known that the Burau representation $\beta_{n,q}$ can be decomposed into the trivial representation $\mathbf{1}$ and an $(n-1)$-dimensional irreducible representation $\beta_{n,q}^r$ :
\begin{align*}
\beta_{n,q}=\mathbf{1} \oplus \beta_{n,q}^r,
\end{align*}
where $\beta_{n,q}^r$ is defined by
\begin{align*}
\beta_{n,q}^r(\sigma_i):=
\begin{cases} \left(
\begin{array}{ccc}
-q & 1 \\
0 & 1 \\
\end{array}\right) \oplus I_{n-3} &(i=1),\\
I_{i-2}\oplus \left(
\begin{array}{cccc}
1 & 0 & 0 \\
q & -q & 1 \\
0 & 0 & 1 \\
\end{array}\right) \oplus I_{n-i-2} &(i=2, 3, \ldots, n-2),\\
I_{n-3}\oplus \left(
\begin{array}{ccc}
1 & 0 \\
q & -q \\
\end{array}\right) &(i=n-1).\\
\end{cases}
\end{align*}
This is called the ${\it reduced}\hspace{0.3em} {\it Burau} \hspace{0.3em}{\it representation}$.This fact is obtained by computing $C_{n,q}^{-1} \cdot \beta_{n,q} (\sigma_i) \cdot C_{n,q}$ (see $\cite[\mathrm{Theorem} \ 3.9]{siryo6}$), where 
\begin{align*}
C_{n,q} :=\left(
\begin{array}{ccccccc}
1 & 1 & 1 & \cdots & 1 \\
   & q & q & \cdots & 1 \\
   &    & q^2 & \cdots & q^2 \\
   &    &       & \ddots & \vdots \\
   &    &       &            & q^{n-1} \\
\end{array}\right) \in \mathrm{GL}_{n-1}(\Lambda).
\end{align*}
When $\hat{\sigma}$ is a knot, Burau proved that the Alexander polynomial can be described by using the reduced Burau representation (see \cite[Theorem 3.11]{siryo3}):
\begin{align*}
\mathrm{det}(I_{n-1}-\beta_{n,q}^r(\sigma))=(1+q+q^2+\cdots +q^{n-1})\Delta_{\hat{\sigma}}(q),
\end{align*}
where, $\Delta_{\hat{\sigma}}(q)$ is the Alexander polynomial of the knot $\hat{\sigma}$.
Since $\pi_n (\sigma)$ is the simple cycle, $\zeta (s,\pi_n (\sigma) ;p_n)$ has a simple pole at $s=1$. 
On the other hand, by the decomposition of the Burau representation, $\zeta(s, \sigma ; \beta_{n,q} )$ must have a pole at $s=1$. By the formula $(2.2)$, the order of this pole is smaller or equal to the order of the pole of $\zeta (s,\pi_n (\sigma) ;p_n)$ at $s=1$. Thus $\zeta(s, \sigma ; \beta_{n,q} )$ has a simple pole at $s=1$. Moreover the residue of $\zeta(s, \sigma ; \beta_{n,q} )$ can be calculated as follows :
\begin{align*}
\underset{s=1}{\Res} \hspace{0.2em}\zeta(s,\sigma; \beta_{n,q})
&=\lim_{s \to 1}\mathrm{det}(I_n-\mathbf{1} \oplus \beta_{n,q}^r(\sigma)s)^{-1}\\
&=\lim_{s \to 1}(s-1)\mathrm{det}(I_{n-1}-\beta_{n,q}^r(\sigma)s)^{-1}\\
&=-\frac{1}{1+q+\cdots+q^{n-1}}\mathrm{det}(I_{n-1}-\beta_{n,q}^r(\sigma)s)^{-1}\\ 
&=-\frac{1-q}{1-q^n}\Delta_{\hat{\sigma}}(q)^{-1}\\
&=-\frac{1}{[n]_q}\Delta_{\hat{\sigma}}(q)^{-1}.
\end{align*}
$(3)$ If the absolute values of the eigenvalues of $\beta_{n,q}^r (\sigma)$ are all equal to $1$, then all poles of $\zeta(e^{-s},\sigma ;\beta_{n,q})$ satisfy
\begin{align*}
e^{-\mathrm{Re}(s)}=|e^{-s}|=|\alpha_q|^{-1}=1,
\end{align*}
where $\alpha_q$ is one of the eigenvalues of $\beta_{n,q}^r (\sigma)$. Then, the real part of $s$ is equal to 0:
\begin{align*} 
\mathrm{Re}(s)=0.
\end{align*}
Hence, it is sufficient to show that the absolute values of the eigenvalues of $\beta_{n,q}^r (\sigma)$ are all equal to $1$. In \cite{siryo9}, Squier proved that the reduced Burau representation is unitary in the following sense. 
We put
\begin{align*}
\Omega_n^r=\left(
\begin{array}{ccccc}
q^{\frac{1}{2}}+q^{-\frac{1}{2}} & -q^{-\frac{1}{2}} & &O \\
-q^{\frac{1}{2}} & \ddots & \ddots & & \\
& \ddots & \ddots & -q^{-\frac{1}{2}} \\
O& & -q^{\frac{1}{2}} & q^{\frac{1}{2}}+q^{-\frac{1}{2}}
\end{array}\right)\in \mathrm{GL}_{n-1}(\Z[q^{\pm\frac{1}{2}}]).
\end{align*}
Then, the following equation holds for any braid $\sigma \in \mathrm{B}_n$. 
\begin{align*}
{\beta_{n,q}^r(\sigma)} \cdot \Omega_n^r \cdot \beta_{n,q}^r(\sigma)^* = \Omega_n^r.
\end{align*}
Here for a matrix $A \in \mathrm{GL}_{n-1}(\Z[q^{\pm \frac{1}{2}}])$, $A^*$ is the conjugate-transpose of $A$ and the conjugate of $a(q) \in \Z[q^{\pm\frac{1}{2}}]$ is defined to be $a(q^{-1})$. Since $q$ belongs to the unit circle on $\C$, we can regard that  $\mathrm{B}_n$ acts on $\C^{n-1}$ by using the reduced Burau representation. 
\begin{align*}
\beta_{n,q}^r : \mathrm{B}_n \longrightarrow \mathrm{GL}_{n-1}(\C).
\end{align*}
Then $A^*$ coincides with the conjugate-transpose of the matrix $A$ with complex entries. Now, we define the following quadratic form for two row vectors $\mathbf{u},\mathbf{v} \in \C^{n-1}$
\begin{align*}
\langle \mathbf{u}, \mathbf{v} \rangle_{\mathrm{B}_n} := \mathbf{u} \cdot \Omega_n^r \cdot \mathbf{v}^*.
\end{align*}
Here $\mathbf{v}^*$ is the conjugate-transpose of the row vector $\mathbf{v}$. Then, we have 
\begin{align*}
\langle \mathbf{u}\beta_{n,q}^r(\sigma), \mathbf{v}\beta_{n,q}^r(\sigma) \rangle_{\mathrm{B}_n} 
&=\mathbf{u} \beta_{n,q}^r(\sigma) \cdot \Omega_n^r \cdot \beta_{n,q}^r(\sigma)^* \mathbf{v}^* \\
&=\mathbf{u} \cdot \Omega_n^r \cdot \mathbf{v}^* \\
&=\langle \mathbf{u}, \mathbf{v} \rangle_{\mathrm{B}_n}.
\end{align*}
Since $\Omega_n^r$ is the Hermitian matrix, the quadratic form $\langle  \cdot ,\cdot  \rangle_{\mathrm{B}_n}$ is positive definite if and only if the eigenvalues of $\Omega_n^r$ are all positive. In this case, the eigenvalues of $\Omega_n^r$ can be computed explicitly by using the formula for the tridiagonal matrix as below (see \cite{siryo10}), that is, a set of eigenvalues of $\Omega_n^r$ coincides with,
\begin{align*}
\bigl\{ q^{\frac{1}{2}}+q^{-\frac{1}{2}}-2\cos{\frac{\pi j}{n}} \mid (j=1, 2, \ldots, n-1)\bigr\}.
\end{align*}
Hence, consequently we can say that $\langle  \cdot  ,\cdot  \rangle_{\mathrm{B}_n}$ is positive definite if and only if
\begin{equation}
|\theta|<\frac{2\pi}{n}.
\end{equation}
Let $\mathbf{u}$ be an eigenvector of $\beta_{n,q}^r (\sigma)$ with the eigenvalue $\alpha_q$ for $\sigma \in \mathrm{B}_n$. Then we have
\begin{align*}
\langle \mathbf{u}, \mathbf{u} \rangle_{\mathrm{B}_n} = \langle \mathbf{u}\beta_{n,q}^r(\sigma), \mathbf{u}\beta_{n,q}^r(\sigma) \rangle_{\mathrm{B}_n} &= \langle \alpha_q \mathbf{u}, \alpha_q \mathbf{u} \rangle_{\mathrm{B}_n}=|\alpha_q|^2\langle \mathbf{u}, \mathbf{u} \rangle_{\mathrm{B}_n}.
\end{align*}
Under the condition $(2.4)$, all eigenvalues of $\beta_{n,q}^r (\sigma)$ satisfy $|\alpha_q |=1$. Therefore we complete the proof of $(3)$.
\end{proof}
We give some examples of braid zeta function.
\begin{ex}\normalfont
Let $\sigma=\sigma_1^3 \in \mathrm{B}_2$.
The matrix $\beta_{2,q}(\sigma_1^3)$ is presented by
\begin{align*}
\beta_{2,q}(\sigma_1^3)=\left(
\begin{array}{ccc}
1-q+q^2-q^3 & 1-q+q^2 \\
q(1-q+q^2) & q(1-q) \\
\end{array}\right).
\end{align*}
Thus, we have
\begin{align*}
\zeta(s,\sigma_1^3 ; \beta_{2,q})&=\mathrm{det}\left(
\begin{array}{ccc}
1-(1-q+q^2-q^3)s & -(1-q+q^2)s \\
-q(1-q+q^2)s & 1-q(1-q)s \\
\end{array}\right)^{-1}\\
&=\frac{1}{(1-s)(1+q^3s)}.
\end{align*}
Then, we can compute the residue of $\zeta(s,\sigma_1^3 ; \beta_{2,q})$ as
\begin{align*}
\underset{s=1}{\Res}\hspace{0.2em}\zeta(s,\sigma_1^3 ; \beta_{2,q})&=\lim_{s \to 1}(s-1)\frac{1}{(1-s)(1+q^3s)}\\
&=-\frac{1}{1+q^3}\\
&=-\frac{1}{[2]_q}\Delta_{\widehat{\sigma_1^3}}(q)^{-1}.
\end{align*}
Here remark that the closure of $\sigma_1^3$ is illustrated in Figure 3.
\begin{figure}[!h]
\begin{center}
\includegraphics[,width=3.0cm]{./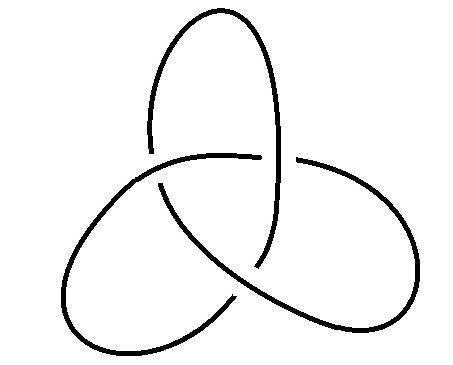} 
\caption{$\widehat{\sigma_1^3}$}
\end{center}
\end{figure}
This is called the ${\it trefoil}$\hspace{0.3em}${\it knot}$.
\end{ex}
\begin{ex}\normalfont
Let $\sigma=(\sigma_1\sigma_2^{-1})^2$.
The matrix $\beta_{3,q} ((\sigma_1\sigma_2^{-1})^2)$ is calculated as
\begin{align*}
\beta_{3,q} ((\sigma_1\sigma_2^{-1})^2)=\left(
\begin{array}{cccc}
(1-q)^2 & q^{-1} & -(1-q^{-1})^2 \\
q(1-q) & 0 & 1 \\
q & 1-q^{-1} & (1-q^{-1})^2 \\
\end{array}\right).
\end{align*}
Then we have
\begin{align*}
\zeta(s,(\sigma_1\sigma_2^{-1})^2 ; \beta_{3,q})=\frac{q^2}{(1-s)(q^2-(1-2q+q^2-2q^3+q^4)s+q^2s^2)}.
\end{align*}
Hence
\begin{align*}
\underset{s=1}{\Res}\hspace{0.2em} \zeta(s,(\sigma_1\sigma_2^{-1})^2; \beta_{3,q})
&=\lim_{s \to 1}\frac{q^2}{q^2-(1-2q+q^2-2q^3+q^4)s+q^2s^2}\\
&=-\frac{q^2}{(1+q+q^2)(-1+3q-q^2)}\\
&=-\frac{1}{[3]_q}\Delta_{\widehat{(\sigma_1\sigma_2^{-1})^2}}(q)^{-1}.
\end{align*}
\newpage
The closure $\widehat{(\sigma_1\sigma_2^{-1})^2}$ is called the ${\it figure}$-${\it eight}$\hspace{0.3em}${\it knot}$, which is illustrated in Figure 4. 
\begin{figure}[!h]
\begin{center}
\includegraphics[,width=3.5cm]{./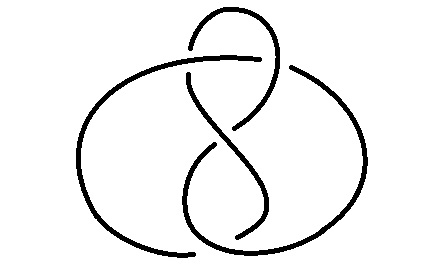} 
\caption{$\widehat{(\sigma_1\sigma_2^{-1})^2}$}
\end{center}
\end{figure}
\end{ex}
Comparing (1.1) with Theorem$1.2\hspace{0.2em}(2)$, we have the following fact.
\begin{cor}
If $\hat{\sigma}$ is a knot for $\sigma \in \mathrm{B}_n$. Then, the Alexander polynomial $\Delta_{\hat{\sigma}}$ holds
\begin{equation}
\Delta_{\hat{\sigma}}(1)=1.\\
\end{equation}
\end{cor}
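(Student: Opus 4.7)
The plan is to derive the corollary by comparing the two residue formulas in the classical permutation setting (1.1) and the braid setting (Theorem 1.2 (2)) via the specialization $q \to 1$ given by the commutative diagram and equation (2.2).

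First I would observe that the hypothesis ``$\hat{\sigma}$ is a knot'' forces $\pi_n(\sigma) \in \mathrm{S}_n$ to be a simple cycle: the closure $\hat{\sigma}$ has exactly one component if and only if the permutation $\pi_n(\sigma)$ induced on the endpoints acts transitively on $X_n$, i.e.\ $\pi_n(\sigma)$ is a single primitive $n$-cycle. Hence formula (1.1) applies and gives $\underset{s=1}{\Res}\,\zeta_{\pi_n(\sigma)}(s)=-1/n$.

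Next I would take the limit $q\to 1$ in the identity from Theorem 1.2 (2):
\begin{equation*}
\underset{s=1}{\Res}\,\zeta(s,\sigma;\beta_{n,q}) \;=\; -\frac{1}{[n]_q}\,\Delta_{\hat{\sigma}}(q)^{-1}.
\end{equation*}
On the right-hand side, $[n]_q = 1+q+\cdots+q^{n-1} \to n$ as $q\to 1$, so the limit is $-\frac{1}{n}\Delta_{\hat{\sigma}}(1)^{-1}$. On the left-hand side, the pole at $s=1$ is simple and its location does not depend on $q$ (it comes from the trivial summand in the decomposition $\beta_{n,q} = \mathbf{1}\oplus \beta_{n,q}^r$), so the residue commutes with the specialization $q\to 1$; by (2.2) this limit equals the residue of $\zeta_{\pi_n(\sigma)}(s)$ at $s=1$, which by the first step is $-1/n$. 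Equating the two sides yields $\Delta_{\hat{\sigma}}(1)=1$.

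The only non-routine point is justifying the interchange of $\underset{s=1}{\Res}$ with $\lim_{q\to 1}$; this is the step I would single out as the main (mild) obstacle. It can be handled by writing $\zeta(s,\sigma;\beta_{n,q}) = (1-s)^{-1}\,\det(I_{n-1}-\beta_{n,q}^r(\sigma)s)^{-1}$ near $s=1$, noting that $\det(I_{n-1}-\beta_{n,q}^r(\sigma))=[n]_q\Delta_{\hat{\sigma}}(q)$ is a polynomial in $q$ which is nonzero at $q=1$ (since $[n]_1=n\neq 0$ and, as we shall conclude, $\Delta_{\hat\sigma}(1)=1$, but a priori one only needs continuity at $q=1$, which follows from polynomial dependence of the matrix entries on $q$). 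Once this continuity is in place, the equality of the two residues is immediate and the corollary follows.
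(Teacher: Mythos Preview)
Your proposal is correct and follows essentially the same route as the paper: take $q\to 1$ in Theorem~1.2\,(2), use (2.2) (which rests on the commutative diagram (2.1)) together with (1.1) to identify the limiting residue as $-1/n$, and solve for $\Delta_{\hat\sigma}(1)$. You supply more detail than the paper does---in particular the observation that $\hat\sigma$ being a knot forces $\pi_n(\sigma)$ to be a single $n$-cycle, and a discussion of why the residue and the limit $q\to 1$ commute---but the last paragraph is phrased in a way that flirts with circularity (invoking $\Delta_{\hat\sigma}(1)=1$ to argue the denominator is nonzero); the clean way to say it is that $\det(I_{n-1}-\beta_{n,q}^r(\sigma))$ is a Laurent polynomial in $q$ whose value at $q=1$ equals $\det(I_{n-1}-p_n^r(\pi_n(\sigma)))=n\neq 0$ by the diagram (2.1), so the reciprocal is continuous at $q=1$ and the interchange is immediate.
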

\begin{proof}
By  Theorem $1.2\hspace{0.2em}(2)$, 
\begin{align*}
\lim_{q \to 1}\underset{s=1}{\Res}\hspace{0.2em} \zeta(s, \sigma ; \beta_{n,q})&=-\lim_{q \to 1}\frac{1}{[n]_q}\Delta_{\hat{\sigma}}(q)^{-1}\\
&=-\frac{1}{n}\Delta_{\hat{\sigma}}(1)^{-1}.
\end{align*}
Then, we obtain $(2.4)$ by the commutative diagram $(2.1)$.
\end{proof}
Under the condition $(2.3)$, we obtain the following generating function expression which converges when the absolute value of $s$ is smaller than $1$. 
\begin{equation}
\zeta(s,\sigma ; \beta_{n,q})=\exp\biggl\{\sum_{m=1}^{\infty}\frac{\mathrm{tr}\beta_{n,q}(\sigma^m)}{m}s^m\biggr\}.
\end{equation}
Later, we assume that $q$ satisfies the condition $(2.3)$ for $\sigma \in \mathrm{B}_n$ and that $q$ is not a root of unity for simplicity. 
By using the expression $(2.5)$, we obtain the following formula.
\begin{prp}
For any braid $\sigma \in \mathrm{B}_n$, 
\begin{align*}
\left.\frac{d}{ds}\log \zeta(s,\sigma ;\beta_{n,q})\right|_{s=0}=\mathrm{tr}\beta_{n,q}(\sigma).
\end{align*}
\end{prp}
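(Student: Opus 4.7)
The plan is to reduce the problem directly to the generating function expression (2.5) that has just been established. Since the hypothesis on $q$ (the condition $(2.3)$, i.e. $|\theta| < 2\pi/n$) guarantees that all eigenvalues of $\beta_{n,q}(\sigma)$ lie on the unit circle, the series
\begin{align*}
\sum_{m=1}^{\infty}\frac{\mathrm{tr}\beta_{n,q}(\sigma^m)}{m}s^m
\end{align*}
converges absolutely on $|s|<1$ and hence represents a holomorphic function there. In particular, the series is term-by-term differentiable on any compact disk containing $s=0$.

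The key step is then simply to take the logarithmic derivative of both sides of $(2.5)$ and evaluate at $s=0$. Differentiating inside the exponential (equivalently, differentiating the series representation of $\log\zeta(s,\sigma;\beta_{n,q})$) yields
\begin{align*}
\frac{d}{ds}\log\zeta(s,\sigma;\beta_{n,q}) = \sum_{m=1}^{\infty}\mathrm{tr}\beta_{n,q}(\sigma^m)\,s^{m-1},
\end{align*}
after which setting $s=0$ isolates the $m=1$ term and gives $\mathrm{tr}\beta_{n,q}(\sigma)$, as claimed.

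There is essentially no obstacle beyond justifying termwise differentiation, which follows from the analyticity on $|s|<1$ established above. So the ``main obstacle'' is really only bookkeeping: one must be sure that $(2.5)$ is indeed valid at $s=0$ (which it is, since both sides equal $1$ there) and that the series for $\log\zeta$ has a positive radius of convergence so that the derivative at $0$ is the coefficient of $s^1$ in the power series, namely $\mathrm{tr}\beta_{n,q}(\sigma)$.
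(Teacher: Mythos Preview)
Your proof is correct and follows exactly the same approach as the paper: take the logarithm of the generating function expression $(2.5)$, differentiate termwise, and evaluate at $s=0$ to pick out the $m=1$ coefficient. If anything, you are slightly more careful than the paper in justifying the termwise differentiation via the convergence on $|s|<1$ guaranteed by condition $(2.3)$.
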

\begin{proof}
From the expression $(2.5)$,
\begin{align*}
\left.\frac{d}{ds}\log \zeta(s,\sigma ;\beta_{n,q})\right|_{s=0}&=\left.\frac{d}{ds}\sum_{m=1}^{\infty}\frac{\mathrm{tr}\beta_{n,q}(\sigma^m)}{m}s^m\right|_{s=0}\\
&=\left.\sum_{m=1}^{\infty}\mathrm{tr}\beta_{n,q}(\sigma^m)s^{m-1}\right|_{s=0}\\
&=\mathrm{tr}\beta_{n,q}(\sigma).
\end{align*}
\end{proof}
\section{Some formulae for the torus type braid}

~~~In this section, we give the explicit formula of the zeta function for the special braid whose closure is isotopic to a torus knot. Furthermore, we define the function $Z_q$ of several braids by using the Kronecker tensor product. If the braids are all the same, then $Z_q$ is equal to the zeta function associated with the tensor product representation $\beta_{n, q}^{\otimes r}$. We consider the case of same torus type braids (Theorem $3.2$) and the case of distinct torus type braids (Theorem $3.3$). Using the generating function expression, we show that the function $Z_q$ can be written by some braid zeta functions for each case. As a corollary, the reside of $Z_q$ at $s=1$ can be expressed by some Alexander polynomials. 
First of all we give the definition of the torus type braid. 
\begin{dfn}\normalfont
For a coprime pair $(n,m) \in \N \times \Z$, we define 
\begin{align*}
\sigma_{n,m}:=(\sigma_1\sigma_2\cdots\sigma_{n-1})^m \in \mathrm{B}_n.
\end{align*}
Then the closure of $\sigma_{n,m}$ is isotopic to the torus knot $T(n,m)$. We call $\sigma_{n,m}$ the ${\it torus}$\hspace{0.3em}${\it type}$\hspace{0.3em}${\it braid}$.
\end{dfn}
Since $\mathrm{B}_1$ is trivial, we assume that the number of strands $n$ is larger than 1 in this section.
\begin{ex}\normalfont
The closure of $\sigma_{2,3} =\sigma_1^3 \in \mathrm{B}_2$ is the torus knot $T(2,3)$.
\end{ex}
\begin{ex}\normalfont
The braid $\sigma_{3,5} = (\sigma_1\sigma_2)^5 \in \mathrm{B}_3$ and its closure are illustrated in Figure $5$.
\begin{figure}[!h]
\begin{center}
\includegraphics[,width=7.0cm]{./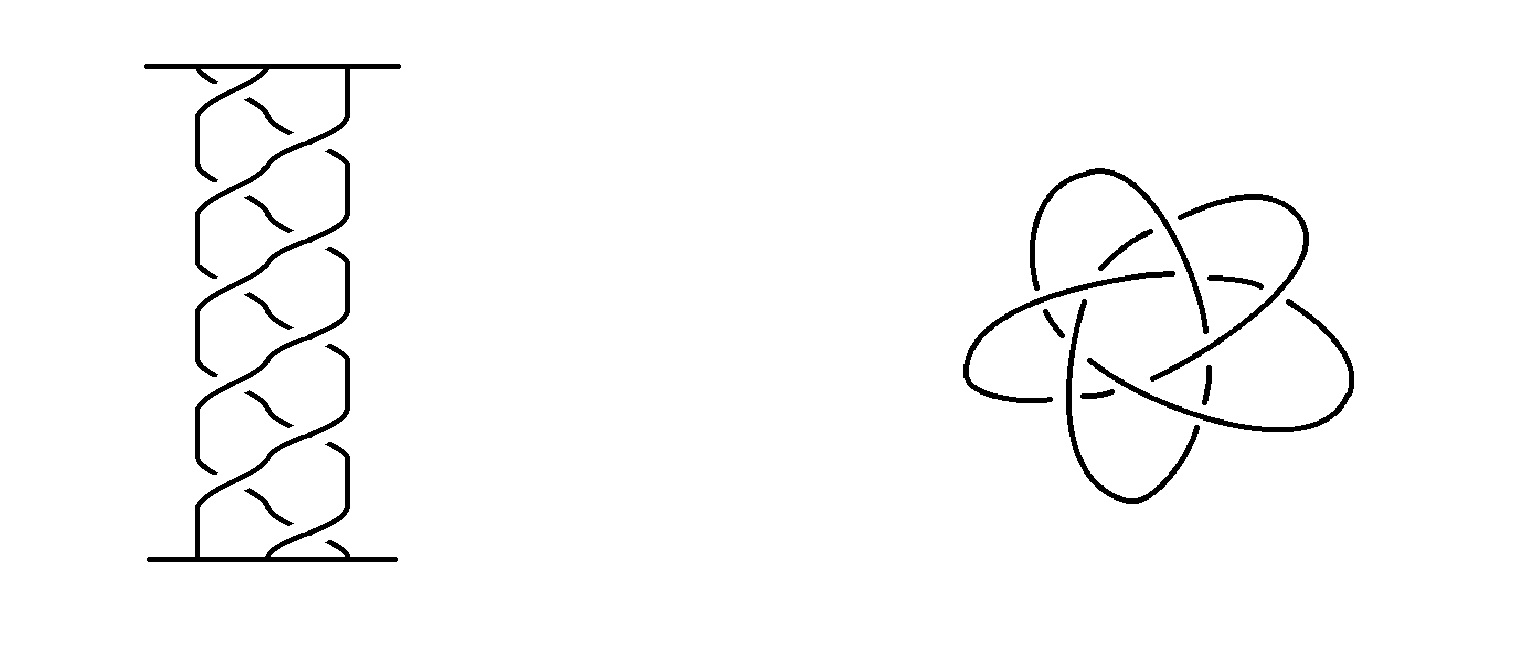} 
\caption{$\sigma_{3,5}$ and its closure}
\end{center}
\end{figure}
\end{ex}
\begin{thm}
For a coprime pair $(n,m)$, we obtain the following explicit formula:
\begin{align*}
\zeta(s,\sigma_{n,m} ;\beta_{n,q})=\frac{(1-q^ms)}{(1-s)(1-q^{nm}s^n)}.\\
\end{align*}
\end{thm}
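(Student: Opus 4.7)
The plan is to reduce the claim to a single eigenvalue computation. Write $T := \beta_{n,q}(\sigma_1\sigma_2\cdots\sigma_{n-1})$, so that $\beta_{n,q}(\sigma_{n,m}) = T^m$ and
\[ \zeta(s,\sigma_{n,m};\beta_{n,q}) = \det(I_n - T^m s)^{-1}. \]
The eigenvalues of $T^m$ are the $m$-th powers of those of $T$, so it suffices to determine the spectrum of $T$ and then exploit $\gcd(n,m)=1$.

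First I will write down $T$ explicitly. Using the defining action of each $\beta_{n,q}(\sigma_i)$ on the standard basis and applying the factors $\beta_{n,q}(\sigma_{n-1}), \beta_{n,q}(\sigma_{n-2}), \ldots, \beta_{n,q}(\sigma_1)$ in that order, a direct induction yields $T e_j = (1-q)e_1 + q e_{j+1}$ for $1 \le j \le n-1$ and $T e_n = e_1$. Thus $T$ has first row $(1-q,1-q,\ldots,1-q,1)$, subdiagonal entries all equal to $q$, and zeros elsewhere.

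Next I will extract the eigenvalues via an ansatz. For $Tv = \lambda v$ with $v = (v_1,\ldots,v_n)^T$, rows $2,\ldots,n$ of the eigenvalue equation give $\lambda v_j = q v_{j-1}$, whence $v_j = (q/\lambda)^{j-1} v_1$. Substituting into the first row equation and summing the geometric series, then clearing denominators, leads to
\[ (\lambda-1)(\lambda^n - q^n) = 0. \]
The apparent root $\lambda = q$ (introduced when clearing the factor $1 - q/\lambda$) is spurious: a direct substitution $v_j \equiv v_1$ into the first-row equation forces $n(1-q) = 0$, impossible for generic $q$. Hence the $n$ eigenvalues of $T$ are exactly $1$ together with $\zeta_n^k q$ for $k = 1, \ldots, n-1$, where $\zeta_n := e^{2\pi i/n}$.

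Finally, the eigenvalues of $T^m$ are $1$ and $\zeta_n^{km} q^m$ for $k = 1, \ldots, n-1$. Since $\gcd(n,m)=1$, the map $k \mapsto km \bmod n$ permutes $\{1,\ldots,n-1\}$, so
\[ \det(I_n - T^m s) = (1-s)\prod_{k=1}^{n-1}(1-\zeta_n^k q^m s) = (1-s)\cdot\frac{1-q^{nm}s^n}{1-q^m s}, \]
and taking the reciprocal yields the stated formula. The main obstacle is the eigenvalue step: the ansatz produces a polynomial of degree $n+1$, so I must excise the spurious factor $\lambda - q$ and confirm that the resulting $n$ eigenvectors are linearly independent (a Vandermonde argument works for generic $q$, and the final polynomial identity in $q$ extends to all $q$ by continuity). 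An alternative that circumvents this issue is a direct cofactor expansion of $\det(I_n - Ts)$ along the last column, which reduces to a triangular minor plus a smaller determinant handled by induction.
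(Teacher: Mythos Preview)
Your proposal is correct and follows essentially the same strategy as the paper: write down the matrix $T=\beta_{n,q}(\sigma_{n,1})$ explicitly, determine its eigenvalues, and then use $\gcd(n,m)=1$ to conclude that the eigenvalues of $T^m$ are $1$ together with $q^m\zeta_n^k$ for $k=1,\dots,n-1$. The only difference is in how the eigenvalues of $T$ are extracted: the paper computes $\det(I_n-Ts)$ directly (obtaining $\tfrac{(1-s)(1-(qs)^n)}{1-qs}$ via expansion, which is exactly the cofactor alternative you mention at the end), whereas you use the eigenvector ansatz $v_j=(q/\lambda)^{j-1}v_1$ and then excise the spurious root $\lambda=q$---a slightly more delicate route, but one that arrives at the same spectrum.
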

\begin{proof}
We compute the eigenvalues of $\beta_{n,q}(\sigma_{n,m})$ over $\C[q^{\pm1}]$. By the definition of the Burau representation, 
 we have
\begin{align*}
\beta_{n,q}(\sigma_{n,1})=\left(
\begin{array}{ccccccc}
1-q & 1-q &  \cdots & 1-q & 1 \\
  q & 0 &  & &  \\
   &   q & \ddots  &  \\
   &    &  \ddots  & 0 &  \\
   &    &       &   q  & 0 \\
\end{array}\right).
\end{align*}
Thus we compute
\begin{align*}
\mathrm{det}(I_n-\beta_{n,q}(\sigma_{n,1})s)&=\mathrm{det}\left(
\begin{array}{ccccccc}
1-(1-q)s & -(1-q)s &  \cdots & -(1-q)s & -s \\
  -qs & 1 &  & &  \\
   &   -qs & \ddots  &  \\
   &    &  \ddots  & 1 &  \\
   &    &       &   -qs  & 1 \\
\end{array}\right)\\
&=(1-(1-q)s)-(1-q)s\sum_{j=1}^{n-2}(qs)^j-s(qs)^{n-1}\\
&=-s\frac{1-(qs)^n}{1-qs}+\frac{1-(qs)^{n}}{1-qs}\\
&=\frac{(1-s)(1-(qs)^n)}{1-qs}.
\end{align*}
Hence, put $\xi_n=e^{\frac{2\pi i}{n}}$, and the eigenvalues of matrix $\beta_{n,q}(\sigma_{n,1})$ are presented by $1,q^{-1} \xi_n,\ldots,q^{-1} \xi_n^{n-1}$. When a pair $(n,m)$ is coprime, the eigenvalues of $\beta_{n,q}(\sigma_{n,m})$ coincide with $1,q^{-m}\xi_n,\ldots,q^{-m}\xi_n^{n-1}$. 
Then we complete the proof of Theorem $3.1$.
\end{proof}
By Theorem $3.1$, $\zeta(s,\sigma_{n,m} ;\beta_{n,q})$ satisfies an analogue of the Riemann hypothesis without the condition $(2.3)$. Furthermore, we obtain the formula of the Alexander polynomial of the torus knot, which is the classical result.
\begin{cor}
The Alexander polynomial of the torus knot $T(n,m)$ is given by
\begin{align*}
\Delta_{T(n, m)}(q)=\frac{(1-q)(1-q^{nm})}{(1-q^n)(1-q^m)}.
\end{align*}
\end{cor}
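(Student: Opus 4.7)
The plan is to read off the residue of $\zeta(s,\sigma_{n,m};\beta_{n,q})$ at $s=1$ from the closed form given in Theorem $3.1$, and then invoke Theorem $1.2\hspace{0.2em}(2)$ to extract the Alexander polynomial. Since the closure $\widehat{\sigma_{n,m}} = T(n,m)$ is a knot exactly when $(n,m)$ is coprime, the hypothesis of Theorem $1.2\hspace{0.2em}(2)$ is satisfied, so both sides of that formula are available.

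First I would write
\begin{align*}
\zeta(s,\sigma_{n,m};\beta_{n,q}) = \frac{1-q^m s}{(1-s)(1-q^{nm} s^n)}
\end{align*}
from Theorem $3.1$ and observe that the only factor vanishing at $s=1$ in the denominator is $(1-s)$ (the factor $1-q^{nm}s^n$ is nonzero at $s=1$ since $q$ is not a root of unity, and the numerator $1-q^m$ is also nonzero). Hence the pole at $s=1$ is simple, and a direct computation gives
\begin{align*}
\underset{s=1}{\Res}\hspace{0.2em}\zeta(s,\sigma_{n,m};\beta_{n,q})
= \lim_{s\to 1}(s-1)\cdot\frac{1-q^m s}{(1-s)(1-q^{nm}s^n)}
= -\frac{1-q^m}{1-q^{nm}}.
\end{align*}

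Next I would combine this with Theorem $1.2\hspace{0.2em}(2)$, which says that the same residue equals $-\frac{1}{[n]_q}\Delta_{T(n,m)}(q)^{-1}$. Setting the two expressions equal and using $[n]_q=(1-q^n)/(1-q)$, one obtains
\begin{align*}
\Delta_{T(n,m)}(q)^{-1} = [n]_q\cdot\frac{1-q^m}{1-q^{nm}} = \frac{(1-q^n)(1-q^m)}{(1-q)(1-q^{nm})},
\end{align*}
which inverts to the desired formula.

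There is no real obstacle here; the argument is essentially an algebraic simplification, and the only thing to be a little careful about is confirming that the pole at $s=1$ really is simple (hence that the residue formula of Theorem $1.2\hspace{0.2em}(2)$ applies cleanly), which is immediate from the explicit factorization in Theorem $3.1$ under the standing assumption that $q$ is not a root of unity.
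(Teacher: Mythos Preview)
Your proof is correct and follows essentially the same approach as the paper: both compute the residue of $\zeta(s,\sigma_{n,m};\beta_{n,q})$ at $s=1$ from the explicit formula of Theorem~$3.1$ and then invoke Theorem~$1.2\hspace{0.2em}(2)$ to solve for $\Delta_{T(n,m)}(q)^{-1}$. The paper's version is just more terse, omitting the explicit check that the pole is simple.
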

\begin{proof}
By Theorem $1.2$ and $3.1$, we compute
\begin{align*}
\Delta_{T(n, m)}(q)^{-1}&=-[n]_q\underset{s=1}{\Res} \hspace{0.2em}\zeta(s,\sigma_{n,m}; \beta_{n,q})\\[0.5em]
&=\frac{(1-q^n)(1-q^m)}{(1-q)(1-q^{nm})}.
\end{align*}
\end{proof}
\begin{cor}
If the pair $(n,m)$ is coprime, then for $m' \in \Z$ such that $(n,m+m')$ is coprime, we have the following equation:
\begin{align*}
(1-s)\zeta(s,\sigma_{n,m+m'}; \beta_{n,q})=(1-q^{m'}s)\zeta(q^{m'}s,\sigma_{n,m}; \beta_{n,q}).\\
\end{align*}
\end{cor}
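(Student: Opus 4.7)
The proof will be a direct substitution from Theorem~3.1. Since both $(n,m)$ and $(n,m+m')$ are coprime, Theorem~3.1 gives explicit rational expressions for both $\zeta(s,\sigma_{n,m};\beta_{n,q})$ and $\zeta(s,\sigma_{n,m+m'};\beta_{n,q})$, and the stated identity is purely an algebraic identity between these rational functions. So the plan is simply to plug the closed form into each side and verify equality.

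First I would expand the left-hand side via Theorem~3.1 applied with the pair $(n,m+m')$:
\begin{align*}
(1-s)\zeta(s,\sigma_{n,m+m'};\beta_{n,q})
=(1-s)\cdot\frac{1-q^{m+m'}s}{(1-s)(1-q^{n(m+m')}s^n)}
=\frac{1-q^{m+m'}s}{1-q^{n(m+m')}s^n}.
\end{align*}
Then I would expand the right-hand side by substituting $s\mapsto q^{m'}s$ in Theorem~3.1 applied with $(n,m)$:
\begin{align*}
(1-q^{m'}s)\zeta(q^{m'}s,\sigma_{n,m};\beta_{n,q})
=(1-q^{m'}s)\cdot\frac{1-q^{m}(q^{m'}s)}{(1-q^{m'}s)(1-q^{nm}(q^{m'}s)^n)}
=\frac{1-q^{m+m'}s}{1-q^{n(m+m')}s^n},
\end{align*}
using $q^{nm}\cdot q^{nm'}=q^{n(m+m')}$ in the denominator. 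Both sides coincide, which proves the claim.

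There is essentially no obstacle: the content is entirely captured by Theorem~3.1, and once the closed form is in hand the identity reduces to cancellation of the factors $(1-s)$ and $(1-q^{m'}s)$. The only thing to keep track of is the hypothesis that both $(n,m)$ and $(n,m+m')$ are coprime, which is exactly what allows us to apply Theorem~3.1 to each factor on either side.
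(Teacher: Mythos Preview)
Your proof is correct and follows essentially the same approach as the paper: both arguments apply Theorem~3.1 directly and reduce the identity to an elementary algebraic manipulation. The only cosmetic difference is that the paper manipulates one side into the other rather than simplifying both sides to a common expression.
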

\begin{proof}
By Theorem $3.1$, we compute
\begin{align*}
\zeta(s,\sigma_{n,m+m'}; \beta_{n,q})&=\frac{(1-q^{m+m'}s)}{(1-s)(1-q^{n(m+m')}s^n)}\\
&=\frac{(1-q^m(q^{m'}s))}{(1-s)(1-q^{nm}(q^{m'}s)^n)}\\
&=\frac{(1-q^{m'}s)}{(1-s)}\cdot \frac{(1-q^m(q^{m'}s))}{(1-q^{m'}s)(1-q^{nm}(q^{m'}s)^n)}\\
&=\frac{(1-q^{m'}s)}{(1-s)}\zeta(q^{m'}s,\sigma_{n,m}; \beta_{n,q}).
\end{align*}
\end{proof}
By Corollary $3.2$, we obtain the following immediately.
\begin{cor}
If the pair $(n,m)$ is coprime, then for $m' \in \Z$ such that $(n,m+m')$ is coprime, then we have
\begin{align*}
\Delta_{T(n,m+m')}(q)=\frac{1}{[n]_q(1-q^{m'})}\zeta(q^{m'}, \sigma_{n,m}; \beta_{n,q})^{-1}.\\
\end{align*}
\end{cor}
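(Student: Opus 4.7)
The plan is to derive Corollary 3.3 directly from Corollary 3.2 by specializing the identity $(1-s)\zeta(s,\sigma_{n,m+m'};\beta_{n,q})=(1-q^{m'}s)\zeta(q^{m'}s,\sigma_{n,m};\beta_{n,q})$ and extracting the Alexander polynomial from the left-hand side via Theorem 1.2(2).

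First, I would let $s \to 1$ in the identity from Corollary 3.2. On the left-hand side, since the closure of $\sigma_{n,m+m'}$ is a knot (the torus knot $T(n,m+m')$, using that $(n,m+m')$ is coprime), Theorem 1.2(2) yields
\begin{align*}
\lim_{s\to 1}(1-s)\,\zeta(s,\sigma_{n,m+m'};\beta_{n,q}) \;=\; -\underset{s=1}{\Res}\,\zeta(s,\sigma_{n,m+m'};\beta_{n,q}) \;=\; \frac{1}{[n]_q}\,\Delta_{T(n,m+m')}(q)^{-1}.
\end{align*}
On the right-hand side, one simply substitutes $s=1$ to obtain $(1-q^{m'})\,\zeta(q^{m'},\sigma_{n,m};\beta_{n,q})$. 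Equating the two sides and solving algebraically for $\Delta_{T(n,m+m')}(q)$ gives the claimed formula.

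The one point that should be checked, and which I would mention in passing, is that $\zeta(q^{m'},\sigma_{n,m};\beta_{n,q})$ is actually finite, so that the right-hand side is a genuine limit rather than an indeterminate form. Using the explicit expression
\begin{align*}
\zeta(s,\sigma_{n,m};\beta_{n,q}) = \frac{1-q^m s}{(1-s)(1-q^{nm}s^n)}
\end{align*}
from Theorem 3.1, the poles in $s$ lie at $s=1$ and at $s=q^{-m}\xi_n^{\,j}$ for $j=0,1,\ldots,n-1$. Under the standing assumption that $q$ is not a root of unity and $m'\neq 0$, none of these values equals $q^{m'}$ (indeed $q^{m+m'}=\xi_n^{\,j}$ is impossible since the left side is not a root of unity), so the evaluation is legitimate.

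Since everything is essentially formal manipulation of the two identities already established, there is no serious obstacle; the only mild technicality is the well-definedness check above. I would present the argument in two short displayed equations: the limit computation on each side of the Corollary 3.2 identity, followed by rearranging to isolate $\Delta_{T(n,m+m')}(q)$.
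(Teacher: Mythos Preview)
Your proof is correct and follows exactly the route the paper intends: the paper merely states that Corollary~3.3 is obtained ``immediately'' from Corollary~3.2, and your argument---letting $s\to 1$ in the identity of Corollary~3.2 and invoking Theorem~1.2(2) on the left-hand side---is precisely how one makes that immediate. Your additional remark verifying that $\zeta(q^{m'},\sigma_{n,m};\beta_{n,q})$ is finite is a welcome point of rigor that the paper omits.
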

\begin{cor}
If a pair $(n,m)$ is coprime, the following holds.
\begin{align*}
\mathrm{tr}\beta_{n,q}(\sigma_{n,m})=1-q^m.
\end{align*}
\end{cor}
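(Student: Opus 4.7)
The plan is to deduce this corollary directly from Theorem 3.1 together with Proposition 2.1, rather than by reading off the trace from the list of eigenvalues. This is the cleanest route because both ingredients are already in hand.

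First, I would invoke Proposition 2.1, which tells us that
\[
\mathrm{tr}\,\beta_{n,q}(\sigma_{n,m}) = \left.\frac{d}{ds}\log \zeta(s, \sigma_{n,m}; \beta_{n,q})\right|_{s=0}.
\]
Then I would substitute the explicit formula from Theorem 3.1, namely
\[
\zeta(s,\sigma_{n,m}; \beta_{n,q}) = \frac{1-q^m s}{(1-s)(1-q^{nm}s^n)},
\]
take the logarithm to split it into a sum of three terms
\[
\log(1-q^m s) - \log(1-s) - \log(1-q^{nm}s^n),
\]
and differentiate termwise with respect to $s$.

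The next step is simply to evaluate the logarithmic derivative at $s=0$. The first two terms contribute $-q^m$ and $1$ respectively, while the third term $\frac{n q^{nm} s^{n-1}}{1 - q^{nm}s^n}$ vanishes at $s=0$ because $n \geq 2$ (recall the standing assumption in this section). Summing these three contributions yields $1 - q^m$, as required.

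There is essentially no obstacle here — the argument is a one-line log-derivative computation once Theorem 3.1 is in place. The only mild point to note is why the third term is zero at $s=0$, which is exactly the assumption $n \geq 2$ recorded just after Definition 3.1; the case $n=1$ is excluded because $\mathrm{B}_1$ is trivial. I would mention this briefly to make the argument airtight.
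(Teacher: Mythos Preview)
Your proposal is correct and follows essentially the same route as the paper's own proof: apply Proposition 2.1, substitute the explicit zeta formula from Theorem 3.1, take logarithmic derivatives termwise, and evaluate at $s=0$. Your added remark about needing $n\geq 2$ to kill the third term is a nice touch that the paper leaves implicit.
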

\begin{proof} 
Using Proposition $2.1$ and Theorem $3.1$, we compute,
\begin{align*}
\mathrm{tr}\beta_{n,q}(\sigma_{n,m})&=\left.\frac{d}{ds}\log \frac{(1-q^ms)}{(1-s)(1-q^{nm}s^n)} \right|_{s=0}\\
&=\left.\frac{d}{ds}\bigl\{ \log(1-q^ms)-\log(1-s)-\log(1-q^{nm}s^n) \bigr\}\right|_{s=0}\\
&=\left.\bigl\{ \frac{-q^{m}}{1-q^ms}-\frac{-1}{1-s}-\frac{-n q^{nm}s^{n-1}}{1-q^{nm}s^n}\bigr\}\right|_{s=0}\\
&=1-q^m.
\end{align*}
\end{proof}
Now, we define a function associated with some braids. 
\begin{dfn}\normalfont
For $\tau_{1} \in \mathrm{B}_{n_1}, \ldots ,\tau_r \in \mathrm{B}_{n_r}$, we define 
\begin{align*}
Z_q(s ; \tau_1, \ldots,  \tau_r):= \mathrm{det}(I_{n_1\cdots n_r}-\beta_{n_1,q}(\tau_1)\otimes \cdots \otimes \beta_{n_r,q}(\tau_r)s)^{-1}.
\end{align*}
Here $\otimes$ is the Kronecker tensor product of matrices. Especially, for a positive integer $r$ and $\sigma \in \mathrm{B}_n$ we can denote
\begin{align*}
Z_q(s ; 	\underbrace{\sigma, \ldots, \sigma}_{r} )=\zeta(s, \sigma ; \beta_{n,q}^{\otimes r}).
\end{align*}
\end{dfn}
Let $X, Y$ be the finite sets with $\#X=n, \#Y=m$. In \cite{siryo7}, 
for $\sigma \in \mathrm{S}_n = \mathrm{Aut}(X), \tau \in \mathrm{S}_m = \mathrm{Aut}(Y)$, we define $\zeta_{\sigma \otimes \tau }(s)$ as the zeta function of a dynamical system over $X\times Y$. By the commutative diagram $(2.1)$,  we have
\begin{align*}
\lim_{q \to 1}Z_q(s,\sigma, \tau) = \zeta_{\pi_n(\sigma) \otimes \pi_m(\tau)}(s),
\end{align*}
for $\sigma \in \mathrm{B}_n, \tau \in \mathrm{B}_m$. Then, we can regard $Z_q(s,\sigma, \tau)$ as the $q$-analogue of $\zeta_{\pi_n(\sigma) \otimes \pi_m(\tau)}(s)$. Now, we give some formulae for the torus type braids. 
\begin{thm}
We choose $n \in \Z_{\geq 2}, m \in \Z, r \in \N$ such that the pair $(n, r!\cdot m)$ is coprime. 
We put
\begin{align*}
K_{n,m,r}(s,q):=\prod_{l=1}^{r}\biggl( \frac{1-s}{1-q^{lm}s}\biggr)^{a_{r,l}+b_{r,l,n}}.
\end{align*}
Here $a_{r,l}:={}_r C _l (-1)^l, b_{r,l,n}:={}_r C _l \frac{(n-1)^l-(-1)^l}{n}$. Then we have
\begin{equation}
\zeta(s, \sigma_{n,m} ; \beta_{n,q}^{\otimes r})=K_{n,m,r}(s,q) \cdot \prod_{l=1}^{r}\zeta(s,\sigma_{n,lm} ; \beta_{n,q})^{b_{r,l,n}}.
\end{equation}
\end{thm}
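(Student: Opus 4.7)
The plan is to diagonalize the tensor power $\beta_{n,q}(\sigma_{n,m})^{\otimes r}$ using the eigenvalues supplied by Theorem~3.1, group the resulting product over multi-indices by how many coordinates are nonzero, reduce the combinatorics to a discrete Fourier computation on $\Z/n\Z$, and finally match the resulting product against the right-hand side via the Theorem~3.1 formula for each $\zeta(s,\sigma_{n,lm};\beta_{n,q})$.

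First, I would read off from the proof of Theorem~3.1 that
\begin{align*}
\det(I_n - \beta_{n,q}(\sigma_{n,m})s) = (1-s)\prod_{k=1}^{n-1}\bigl(1 - q^m \xi_n^k s\bigr),
\end{align*}
so $\beta_{n,q}(\sigma_{n,m})$ has eigenvalues $\alpha_0=1$ and $\alpha_k=q^m\xi_n^k$ for $k=1,\dots,n-1$ (with $\xi_n=e^{2\pi i/n}$). The coprimality hypothesis $(n,r!\cdot m)=1$ ensures that $q^{lm}$ for $1\le l\le r$ behave as distinct generic monomials, so that the tensor power has no accidental coincidences. Since the eigenvalues of a Kronecker tensor product are products of eigenvalues,
\begin{align*}
\zeta(s,\sigma_{n,m};\beta_{n,q}^{\otimes r})^{-1} = \prod_{(k_1,\dots,k_r)\in\{0,1,\dots,n-1\}^r}\bigl(1-\alpha_{k_1}\cdots\alpha_{k_r}s\bigr).
\end{align*}

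Next, I would partition the index set by $l:=\#\{i:k_i\neq 0\}$. For each $l$, there are $\binom{r}{l}$ ways to pick the nonzero positions, and the corresponding factor depends only on $l$ and on the residue $S(\vec{k}):=k_{i_1}+\cdots+k_{i_l}\bmod n$ of the nonzero entries. Hence I must compute $c_{l,n}(t):=\#\{\vec{k}\in\{1,\dots,n-1\}^l:S(\vec{k})\equiv t\pmod n\}$. Because $\sum_{k=1}^{n-1}\xi_n^{jk}$ equals $n-1$ for $j\equiv 0$ and $-1$ otherwise, Fourier inversion on $\Z/n\Z$ yields
\begin{align*}
c_{l,n}(0) = \frac{(n-1)^l + (n-1)(-1)^l}{n}, \qquad c_{l,n}(t) = \frac{(n-1)^l - (-1)^l}{n}\ \ (t\not\equiv 0).
\end{align*}
The key observation is that $c_{l,n}(0) - c_{l,n}(t) = (-1)^l$ for $t\neq 0$, which together with the identity $\prod_{t=0}^{n-1}(1-q^{ml}\xi_n^t s) = 1 - q^{nml}s^n$ collapses the inner product over $\vec{k}\in\{1,\dots,n-1\}^l$ to $(1-q^{ml}s)^{(-1)^l}(1-q^{nml}s^n)^{(c_{l,n}(t)\text{ for }t\neq 0)}$. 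Raising to the $\binom{r}{l}$-th power gives exactly $(1-q^{ml}s)^{a_{r,l}}(1-q^{nml}s^n)^{b_{r,l,n}}$.

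Assembling over $l$, I arrive at
\begin{align*}
\zeta(s,\sigma_{n,m};\beta_{n,q}^{\otimes r})^{-1} = (1-s)\prod_{l=1}^{r}(1-q^{ml}s)^{a_{r,l}}(1-q^{nml}s^n)^{b_{r,l,n}},
\end{align*}
the leading $(1-s)$ coming from the $l=0$ block. On the other hand, substituting the Theorem~3.1 expression $\zeta(s,\sigma_{n,lm};\beta_{n,q})^{-1} = (1-s)(1-q^{nlm}s^n)/(1-q^{lm}s)$ into the RHS of $(3.1)$ and simplifying the $(1-s)$ and $(1-q^{lm}s)$ exponents yields precisely the same expression, using the binomial identity $\sum_{l=1}^{r}a_{r,l} = \sum_{l=0}^{r}\binom{r}{l}(-1)^l - 1 = -1$ to account for the single factor of $(1-s)$.

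The main obstacle is the Fourier step that pins down $c_{l,n}(t)$; once it is in hand, the appearance of both $a_{r,l}=\binom{r}{l}(-1)^l$ and $b_{r,l,n}=\binom{r}{l}\frac{(n-1)^l-(-1)^l}{n}$ is transparent, and the rest is algebraic bookkeeping. A secondary care point is checking that the coprimality of $(n,r!\cdot m)$ prevents $1 - q^{ml}s$ and $1-s$ from coalescing or canceling against factors they should not.
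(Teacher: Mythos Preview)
Your argument is correct, but it proceeds along a genuinely different route from the paper's own proof. The paper works on the trace side: it uses the generating-function expression $\zeta=\exp\bigl\{\sum_{j\ge 1}\mathrm{tr}(\beta_{n,q}^{\otimes r}(\sigma_{n,m})^j)s^j/j\bigr\}$, computes $\mathrm{tr}\,\beta_{n,q}(\sigma_{n,m}^j)$ as $1-q^{mj}$ or $1+(n-1)q^{mkn}$ according to whether $n\mid j$, raises to the $r$-th power via the binomial theorem, and then sums the resulting logarithmic series. You instead work on the determinant side: you diagonalise $\beta_{n,q}(\sigma_{n,m})^{\otimes r}$ directly, stratify the $n^r$ eigenvalue products by the number $l$ of nonzero coordinates, and reduce the inner count to the discrete Fourier identity $c_{l,n}(t)=\frac{(n-1)^l-(-1)^l}{n}$ for $t\not\equiv 0$, with $c_{l,n}(0)-c_{l,n}(t)=(-1)^l$. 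Both routes land on the same closed form $\zeta^{-1}=(1-s)\prod_{l=1}^r(1-q^{lm}s)^{a_{r,l}}(1-q^{nlm}s^n)^{b_{r,l,n}}$ and then verify the right-hand side via Theorem~3.1 and $\sum_{l=1}^r a_{r,l}=-1$ identically. Your approach has the advantage of being purely algebraic (no appeal to the convergence hypothesis~(2.3) underlying~(2.5)); the paper's trace computation is perhaps quicker once~(2.5) is granted, since the two-case formula for the trace replaces your Fourier step. One small clarification: the role of the coprimality $(n,r!\cdot m)=1$ is not really to prevent ``accidental coincidences'' in your eigenvalue product---that identity holds formally in $\Z[q^{\pm1}][s]$ regardless---but to guarantee $(n,lm)=1$ for each $1\le l\le r$, so that Theorem~3.1 legitimately supplies the factors $\zeta(s,\sigma_{n,lm};\beta_{n,q})$ on the right-hand side.
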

\begin{proof}
Since we know the eigenvalues of $\beta_{n,q}(\sigma_{n,m})$, $\mathrm{tr}\beta_{n,q}(\sigma_{n,m}^j)$ can be calculated directly by
\begin{align*}
\mathrm{tr}\beta_{n,q}(\sigma_{n,m}^j)&=1+q^{jm}\xi_n^j +\cdots +q^{jm}\xi_n^{j(n-1)}\\[0.5em]
&=1+q^{jm}(1+\xi_n^j+\cdots+\xi_n^{j(n-1)})-q^{jm}\\[0.5em]
&=\begin{cases}
1-q^{mj}
& j \not \equiv0 \pmod n, \\
1-q^{mkn}+nq^{mkn}
& j=kn \hspace{0.5em}(k \in \N). 
\end{cases}
\end{align*}
Note that this is another proof of Corollary $3.4$ when $j=1$. Then, we compute
\begin{align*}
\mathrm{tr}(\beta_{n, q}^{\otimes r}(\sigma_{n,m})^j)&=\mathrm{tr}((\beta_{n,q}(\sigma_{n,m})\otimes \cdots \otimes \beta_{n,q}(\sigma_{n,m}))^j)\\[0.5em]
&= \mathrm{tr}(\beta_{n,q}(\sigma_{n,m}^j))^r\\[0.5em]
&=\begin{cases}
(1-q^{mj})^r
& \hspace{0.6em}j\not \equiv0 \hspace{-0.6em}\pmod n, \\
(1+(n-1)q^{mkn})^r
&\hspace{0.6em} j=kn \hspace{0.5em}(k \in \N), 
\end{cases}\\[0.3em]
&=\begin{cases}
\displaystyle\sum_{l=0}^r {}_r C _l (-1)^lq^{lmj}
& j\not \equiv0\hspace{-0.6em}\pmod n, \\[1.0em]
\displaystyle\sum_{l=0}^r {}_r C _l (n-1)^l q^{lmkn}
& j=kn \hspace{0.5em}(k \in \N). 
\end{cases}
\end{align*}
Hence,
\begin{align*}
\zeta(s, \sigma_{n,m} ; \beta_{n,q}^{\otimes r})&=\exp \biggl\{ \sum_{j=1}^{\infty}\sum_{l=0}^r {}_r C _l (-1)^l \frac{q^{lmj}}{j}s^j +\sum_{k=1}^{\infty} \sum_{l=0}^r {}_r C _l \frac{(n-1)^l-(-1)^l}{n} \cdot \frac{q^{lmkn}}{k}s^{kn}\biggr\}\\
&=\exp \biggl\{ \sum_{l=0}^r \{ {}_r C _l (-1)^l \log(1-q^{lm}s)^{-1}+{}_r C _l \frac{(n-1)^l-(-1)^l}{n} \log(1-q^{lmn}s^n)^{-1}\}\biggr\}\\
&=\prod_{l=0}^r \exp \biggl\{a_{r,l}\log(1-q^{lm}s)^{-1}+b_{r,l,n}\log(1-q^{lmn}s^n)^{-1}\biggr\}\\
&=\prod_{l=0}^{r}(1-q^{lm}s)^{-a_{r,l}}(1-q^{lmn}s^{n})^{-b_{r,l,n}}.\\
\end{align*}
On the other hand, we calculate the right-hand side of $(3.1)$ as
\begin{align*}
K_{n,m,r}(s,q) \prod_{l=1}^{r}\zeta(s,\sigma_{n,lm} ; \beta_{n,q})^{b_{r,l,n}}
&=\prod_{l=1}^{r}\biggl( \frac{1-s}{1-q^{lm}s}\biggr)^{a_{r,l}+b_{r,l,n}}\biggl( \frac{1-q^{lm}s}{(1-s)(1-q^{lmn}s^n)}\biggr)^{b_{r,l,n}}\\
&=\prod_{l=1}^{r}\biggl(\frac{1-s}{1-q^{lm}s}\biggr)^{a_{r,l}}(1-q^{lmn}s^n)^{-b_{r,l,n}}\\
&=(1-s)^{a_{r,1}+a_{r,2}+\cdots +a_{r,r}}\prod_{l=1}^{r}(1-q^{lm}s)^{-a_{r,l}}(1-q^{lmn}s^n)^{-b_{r,l,n}}\\
&=(1-s)^{-1}\prod_{l=1}^{r}(1-q^{lm}s)^{-a_{r,l}}(1-q^{lmn}s^n)^{-b_{r,l,n}}\\
&=\prod_{l=0}^{r}(1-q^{lm}s)^{-a_{r,l}}(1-q^{lmn}s^n)^{-b_{r,l,n}}.
\end{align*}
This completes the proof of Theorem $3.2$. 
\end{proof}
\begin{cor}
We choose $n \in \Z_{\geq 2}, m \in \Z, r \in \N$ such that the pair $(n, r!\cdot m)$ is coprime. Then we have
\begin{equation}
\underset{s=1}{\Res}\hspace{0.2em} \zeta(s, \sigma_{n,m} ; \beta_{n,q}^{\otimes r}) = -\frac{1}{[n]_q^{n^{r-1}}}\prod_{l=1}^r\frac{1}{(1-q^{lm})^{a_{r,l}+b_{r,l,n}}\Delta_{T(n,lm)}(q)^{b_{r,l,n}}}.\\
\end{equation}
\end{cor}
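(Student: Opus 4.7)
The plan is to compute $\underset{s=1}{\Res}\,\zeta(s,\sigma_{n,m};\beta_{n,q}^{\otimes r})$ directly from the factorization (3.1) of Theorem 3.2 and then convert the answer into Alexander polynomials using Corollary 3.1. Since $q$ is not a root of unity (the standing assumption), the only factor appearing in (3.1) that can contribute a pole or zero at $s=1$ is $1-s$ itself; every $1-q^{lm}s$ and $1-q^{lmn}s^n$ is nonzero there. It therefore suffices to read off, from both $K_{n,m,r}(s,q)$ and the product $\prod_{l=1}^{r}\zeta(s,\sigma_{n,lm};\beta_{n,q})^{b_{r,l,n}}$, the $(1-s)$-orders together with the constant prefactors that survive at $s=1$.

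From Theorem 3.1 we have the local expansion
\[
\zeta(s,\sigma_{n,lm};\beta_{n,q}) \;\sim\; \frac{1-q^{lm}}{1-q^{lmn}}\cdot\frac{1}{1-s}\qquad (s\to 1),
\]
so the product behaves like $(1-s)^{-N}\prod_{l=1}^{r}\bigl((1-q^{lm})/(1-q^{lmn})\bigr)^{b_{r,l,n}}$, where $N:=\sum_{l=1}^{r}b_{r,l,n}$. The binomial identities $\sum_{l=0}^{r}\binom{r}{l}(n-1)^l = n^r$ and $\sum_{l=0}^{r}\binom{r}{l}(-1)^l = 0$ give $N = n^{r-1}$. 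Similarly, $K_{n,m,r}(s,q) \sim (1-s)^{n^{r-1}-1}\prod_{l=1}^{r}(1-q^{lm})^{-(a_{r,l}+b_{r,l,n})}$ near $s=1$, since $\sum_{l=1}^{r}a_{r,l} = -1$. Multiplying these expansions the $(1-s)$-orders cancel to $(1-s)^{-1}$, and the residue at $s=1$ becomes
\[
\underset{s=1}{\Res}\,\zeta(s,\sigma_{n,m};\beta_{n,q}^{\otimes r}) \;=\; -\prod_{l=1}^{r}(1-q^{lm})^{-a_{r,l}}(1-q^{lmn})^{-b_{r,l,n}}.
\]

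Finally, I would rewrite $1-q^{lmn}$ using Corollary 3.1 in the form $1-q^{lmn} = [n]_q(1-q^{lm})\Delta_{T(n,lm)}(q)$ and substitute into the displayed residue. Collecting contributions by type, the total exponent of $[n]_q$ becomes $-\sum_{l=1}^{r}b_{r,l,n} = -n^{r-1}$, the exponent of $1-q^{lm}$ becomes $-(a_{r,l}+b_{r,l,n})$, and the exponent of $\Delta_{T(n,lm)}(q)$ becomes $-b_{r,l,n}$, which is exactly the form of (3.2). The main obstacle is purely bookkeeping: verifying the two combinatorial identities $\sum_{l=1}^{r} a_{r,l}=-1$ and $\sum_{l=1}^{r} b_{r,l,n}=n^{r-1}$ so that the $(1-s)$-orders cancel cleanly and the final exponent of $[n]_q$ comes out to $n^{r-1}$ as claimed.
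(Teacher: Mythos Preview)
Your proof is correct and follows essentially the same route as the paper: start from the factorization in Theorem~3.2, evaluate the $(1-s)$--order and the constant prefactor of each piece at $s=1$, and use the binomial identity $\sum_{l=1}^{r} b_{r,l,n}=n^{r-1}$ to get the exponent of $[n]_q$. The only cosmetic difference is that you go through Theorem~3.1 and Corollary~3.1 to convert $(1-q^{lmn})$ into $[n]_q(1-q^{lm})\Delta_{T(n,lm)}(q)$, whereas the paper invokes Theorem~1.2(2) directly to read off each residue as $-\tfrac{1}{[n]_q}\Delta_{T(n,lm)}(q)^{-1}$; these are equivalent, and your version in fact makes explicit the identity $\sum_{l=1}^{r} a_{r,l}=-1$ needed to check that the overall pole at $s=1$ is simple, which the paper leaves implicit.
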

\begin{proof}
The number of $[n]_q$ is calculated by
\begin{align*}
\sum_{l=1}^rb_{r,l,n}=\sum_{l=0}^rb_{r,l,n}&=\frac{1}{n}\sum_{l=0}^r{}_r C _l (n-1)^l-\frac{1}{n}\sum_{l=0}^r{}_r C _l (-1)^l\\
&=\frac{1}{n}(1+(n-1))^r\\[1.0em]
&=n^{r-1}.
\end{align*}
Hence the formula $(3.2)$ follows from Theorem $1.2$ and $3.2$.
\end{proof} 
In general, if we put $n=(r\cdot|m|)!+1$, the pair $(n, lm)$ is coprme for each $1\leq l \leq r$. This gives an example of Corollary $3.5$.
\begin{ex}\normalfont
We show the case of $r=2$. If a positive odd integer $n$ is coprime to $m \in \Z$, then we compute
\begin{align*}
\mathrm{tr}((\beta_{n,q}(\sigma_{n,m})\otimes \beta_{n,q}(\sigma_{n,m}))^j)&=\mathrm{tr}(\beta_{n,q}(\sigma_{n,m}^j)\otimes \beta_{n,q}(\sigma_{n,m}^j))\\[0.5em]
&=\mathrm{tr}(\beta_{n,q}(\sigma_{n,m}^j))^2\\[0.5em]
&=\begin{cases}
(1-q^{mj})^2
& j \not \equiv0 \pmod n, \\[0.3em]
(1-q^{mkn}+nq^{mkn})^2
& j=kn \hspace{0.5em}(k \in \N).
\end{cases}
\end{align*}
Hence, 
\begin{align*}\hspace{-2.0em}
\zeta(s, \sigma_{n,m} ; \beta_{n,q}^{\otimes 2})&=
\exp \biggl\{ \sum_{j=1}^{\infty}\frac{\mathrm{tr}((\beta_{n,q}(\sigma_{n,m})\otimes \beta_{n,q}(\sigma_{n,m}))^j)}{j}s^j \biggr\} \\
&=\exp\biggl\{ \sum_{j \not \equiv 0\hspace{-0.6em}\pmod n}\frac{1-2q^{mj}+q^{2mj}}{j}s^j +\sum_{k=1}^{\infty}\frac{1-2q^{mkn}+q^{2mkn}+2nq^{mkn}-2nq^{2mkn}+n^2q^{2mkn}}{kn}s^{kn}\biggr\} \\
&=\exp\biggl\{ \sum_{j=1}^{\infty}\frac{1-2q^{mj}+q^{2mj}}{j}s^j+\sum_{k=1}^{\infty}\frac{2q^{mkn}-2q^{2mkn}+nq^{2mkn}}{k}s^{kn}\biggr\} \\
&=\frac{(1-q^ms)^2}{(1-s)(1-q^{2m}s)(1-q^{mn}s^n)^2(1-q^{2mn}s^n)^{n-2}}\\[0.7em]
&=(\frac{1-s}{1-q^{2m}s})^{n-1}\biggl\{\frac{(1-q^ms)}{(1-s)(1-q^{mn}s^n)}\biggr\}^{2}\biggl\{\frac{1-q^{2m}s}{(1-s)(1-q^{2mn}s^n)}\biggr\}^{n-2}\\[1.0em]
&=(\frac{1-s}{1-q^{2m}s})^{n-1}\zeta(s,\sigma_{n,m};\beta_{n,q})^2\zeta(s,\sigma_{n,2m};\beta_{n,q})^{n-2}.\\
\end{align*}
By Theorem $1.2$, we obtain the following.
\begin{align*}
\underset{s=1}{\Res}\hspace{0.2em} \zeta(s,\sigma_{n,m}; \beta_{n,q}^{\otimes 2})=-\frac{1}{(1-q^{2m})^{n-1}[n]_q^n\Delta_{T(n,m)}^2(q)\Delta_{T(n,2m)}^{n-2}(q)}.\\
\end{align*}
\end{ex}
Next, we define some symbols to prove the formula of the case of some distinct torus type braids. 
\begin{dfn}\normalfont
$(1)$ We put $\Omega :=\Set{1, 2, \ldots , r}$. For $\mathbf{n}=\Set{n_i \in \Z \mid i \in I \subseteq \Omega }$, we define
\begin{align*}
[\mathbf{n}]&:=\prod_{i \in I} n_i,\hspace{1.0em} [\Set{\emptyset}]:=1, \\[0.5em]
|\mathbf{n}|&:=\sum_{ i \in I} n_i,\hspace{1.0em}|\Set{\emptyset}|:=0.
\end{align*}
$(2)$ We assume that $n_1, \ldots, n_r \in \Z_{>1}$ are all coprime. Then for $I \subseteq \Omega$, we define
\begin{align*}
A_i&:=n_i \N=\Set{k \in \N \mid k \equiv 0 \pmod{n_i}} \hspace{0.5em}(1\leq i\leq r),\\[1.0em]
E_I&:=(\bigcap_{i \in I}A_i)\cap(\bigcap_{j \notin I}A_j^c),\\[0.8em]
F_I&:=\bigcap_{i \in I}A_i =\bigcup_{I\subseteq I'}E_{I'}.
\end{align*}
$(3)$ We define the subset associated with an index set $I \subseteq \Omega$ as follows:
\begin{align*}
\mathbf{n}(I):=\Set{n_{i}\in \Z_{>1} \mid i \in I},\hspace{1.0em} \mathbf{m}(I):=\Set{m_{i}\in \Z \mid  i \in I}.
\end{align*}
\end{dfn}
\begin{rem}\normalfont
Remark that the following holds:
\begin{align*}
\N=\bigsqcup_{I \subseteq \Omega}E_I.
\end{align*}
\end{rem}
We give the following lemma.
\begin{lem}
$(1)$ We fix the subset $I \subseteq \Omega$, then we have
\begin{align*}
\sum_{I\subseteq J \subseteq \Omega}(-1)^{\# J}=(-1)^{\# I}\delta_{I,\Omega}.
\end{align*}
Here for subsets $I, J \subset \Omega$, $\delta_{I,J}$ is defined as follows:
\begin{align*}
\delta_{I,J}:=\begin{cases}
1&I=J,\\[0.5em]
0&I\neq J.
\end{cases}
\end{align*}
$(2)$ We have the following formula:
\begin{align*}
\sum_{\emptyset \neq I \subseteq J \subseteq \Omega}(-1)^{\#J-\#I}=1.
\end{align*}
\end{lem}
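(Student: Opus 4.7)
The plan is to derive both identities from the elementary fact that, for any finite set $S$, the alternating sum $\sum_{K \subseteq S}(-1)^{\#K}$ equals $1$ when $S = \emptyset$ and $0$ otherwise. This is just the binomial identity $\sum_{k=0}^{\#S}\binom{\#S}{k}(-1)^k = (1-1)^{\#S}$.

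For part $(1)$, I would parametrize the subsets $J$ with $I \subseteq J \subseteq \Omega$ by writing $J = I \sqcup K$ with $K \subseteq \Omega \setminus I$. Factoring out $(-1)^{\#I}$, the left-hand side becomes
$$(-1)^{\#I}\sum_{K \subseteq \Omega \setminus I}(-1)^{\#K}.$$
By the basic fact above, the inner sum vanishes unless $\Omega \setminus I = \emptyset$, i.e.\ $I = \Omega$, in which case it equals $1$. This reproduces $(-1)^{\#I}\delta_{I,\Omega}$ exactly.

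For part $(2)$, I would swap the order of summation so that $I$ is fixed and $J$ ranges over supersets with $I \subseteq J \subseteq \Omega$. Using $(-1)^{\#J-\#I} = (-1)^{\#I}(-1)^{\#J}$ and invoking part $(1)$, the inner sum over $J$ collapses to $(-1)^{\#I}\cdot(-1)^{\#I}\delta_{I,\Omega} = \delta_{I,\Omega}$. The outer sum then reduces to $\sum_{\emptyset \neq I \subseteq \Omega}\delta_{I,\Omega} = 1$, since only $I = \Omega$ contributes.

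I do not expect a genuine obstacle; the argument is inclusion--exclusion on the power set of $\Omega$. The only points that require care are the boundary cases that make the right-hand sides equal $1$ rather than $0$: in $(1)$ the special case $I = \Omega$ is what makes $\Omega \setminus I$ empty and revives a nonzero term, and in $(2)$ the exclusion of $I = \emptyset$ is precisely what prevents the pair $(\emptyset,\emptyset)$ from cancelling the surviving contribution at $I = \Omega$.
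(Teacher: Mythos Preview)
Your proposal is correct and follows essentially the same route as the paper: for $(1)$ the paper also parametrizes $J$ by the extra elements $J\setminus I$ (written there via the count $t=\#J-\#I$ and the binomial sum $\sum_{t}\binom{\#\Omega-\#I}{t}(-1)^{t}$), and for $(2)$ the paper likewise fixes $I$, sums over $J\supseteq I$, and invokes part $(1)$ to isolate the contribution $I=\Omega$. Your discussion of the boundary cases is accurate and mirrors exactly how the paper's computation collapses.
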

\begin{proof}\normalfont
$(1)$ We compute
\begin{align*}
\sum_{I \subseteq J \subseteq \Omega}(-1)^{\#J}&=\sum_{t=0}^{\# \Omega -\# I}{}_{\# \Omega -\# I} C_{t}(-1)^{t+\#I}\\
&=(-1)^{\# I}\sum_{t=0}^{\# \Omega -\# I}{}_{\# \Omega -\# I} C_{t}(-1)^{t}\\[0.5em]
&=
(-1)^{\# I}\delta_{I,\Omega}.
\end{align*}
$(2)$ By $(1)$, we have
\begin{align*}
\sum_{\emptyset \neq I \subseteq J \subseteq \Omega}(-1)^{\#J-\#I}&=\sum_{\emptyset \neq I \subseteq \Omega}(-1)^{- \# I}\sum_{ I \subseteq J \subseteq \Omega}(-1)^{\# J}\\[0.5em]
&=\sum_{\emptyset \neq I \subset \Omega}(-1)^{-\# I}\cdot 0+(-1)^{-\# \Omega}\cdot (-1)^{\# \Omega}\\[0.5em]
&=1.\\
\end{align*}
Hence we complete the proof of Lemma $3.1$.
\end{proof}
\begin{dfn}
For subset $I \subseteq \Omega$, we define the followings:
\begin{align*}
T^{(1)}_I(z)&:=\prod_{i \in I}(1-z^{m_i}+n_iz^{m_i}),\\[0.5em]
T^{(2)}_I(z)&:=\prod_{i \in I}(1-z^{m_i}).\\
\end{align*}
\end{dfn}
\begin{thm}
If the pairs $([\mathbf{n}(I)], |\mathbf{m}(I)|)\hspace{0.2em}$ are all coprime for any $I \subseteq \Omega$,
and $n_1, \ldots, n_r$ are also coprime, then we obtain the following formula:
\begin{align*}
Z_q(s; \sigma_{n_1,m_1}, \ldots, \sigma_{n_r, m_r})=\prod_{\emptyset \neq I \subseteq J \subseteq \Omega}\zeta(s, \sigma_{[\mathbf{n}(I)], |\mathbf{m}(J)|}; \beta_{[\mathbf{n}(I)],q})^{(-1)^{\#J-\# I}}.
\end{align*}
\end{thm}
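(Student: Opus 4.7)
The plan is to follow the logarithmic approach used for Theorem~3.2. Starting from the Kronecker-product identity $\mathrm{tr}(A_1\otimes\cdots\otimes A_r)=\prod_{i}\mathrm{tr}(A_i)$, I expand
\begin{align*}
\log Z_q(s;\sigma_{n_1,m_1},\ldots,\sigma_{n_r,m_r})=\sum_{j=1}^{\infty}\frac{s^{j}}{j}\prod_{i=1}^{r}\mathrm{tr}\beta_{n_i,q}(\sigma_{n_i,m_i}^{j}).
\end{align*}
From the computation in the proof of Theorem~3.2, each single-factor trace equals $(1-q^{m_i j})+n_i q^{m_i j}\chi_{A_i}(j)$, where $\chi_{A_i}$ is the indicator of $A_i=n_i\N$.

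Next, I expand the $r$-fold product by grouping indices according to the chosen summand at each $i\in\Omega$. Let $I\subseteq\Omega$ index the positions where the indicator summand is chosen; then the remaining positions yield $T^{(2)}_{\Omega\setminus I}(q^{j})=\sum_{K\subseteq\Omega\setminus I}(-1)^{\#K}q^{|\mathbf{m}(K)|j}$, and re-indexing by $J=I\sqcup K$ (so $|\mathbf{m}(I)|+|\mathbf{m}(K)|=|\mathbf{m}(J)|$) gives
\begin{align*}
\prod_{i=1}^{r}\mathrm{tr}\beta_{n_i,q}(\sigma_{n_i,m_i}^{j})=\sum_{I\subseteq J\subseteq\Omega}(-1)^{\#J-\#I}[\mathbf{n}(I)]\,q^{|\mathbf{m}(J)|j}\,\chi_{F_I}(j).
\end{align*}
The coprimality of $n_1,\ldots,n_r$ yields $F_I=[\mathbf{n}(I)]\N$, so the substitution $j=[\mathbf{n}(I)]k$ in the $j$-series produces the factor $-\frac{1}{[\mathbf{n}(I)]}\log\bigl(1-q^{[\mathbf{n}(I)]|\mathbf{m}(J)|}s^{[\mathbf{n}(I)]}\bigr)$, in which the $[\mathbf{n}(I)]$ cancels. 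Exponentiating,
\begin{align*}
Z_q(s;\sigma_{n_1,m_1},\ldots,\sigma_{n_r,m_r})=\prod_{I\subseteq J\subseteq\Omega}\bigl(1-q^{[\mathbf{n}(I)]|\mathbf{m}(J)|}s^{[\mathbf{n}(I)]}\bigr)^{(-1)^{\#J-\#I+1}}.
\end{align*}

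To conclude, I match this compact product with the stated formula using the expansion from Theorem~3.1,
\begin{align*}
\zeta(s,\sigma_{[\mathbf{n}(I)],|\mathbf{m}(J)|};\beta_{[\mathbf{n}(I)],q})=\frac{1-q^{|\mathbf{m}(J)|}s}{(1-s)\bigl(1-q^{[\mathbf{n}(I)]|\mathbf{m}(J)|}s^{[\mathbf{n}(I)]}\bigr)}.
\end{align*}
Raising this to $(-1)^{\#J-\#I}$ and multiplying over $\emptyset\neq I\subseteq J\subseteq\Omega$, the denominator pieces reproduce exactly the $I\neq\emptyset$ terms of my product. The ``ghost'' $(1-s)$ collects total exponent $-\sum_{\emptyset\neq I\subseteq J\subseteq\Omega}(-1)^{\#J-\#I}=-1$ by Lemma~3.1$(2)$, supplying the $I=J=\emptyset$ term. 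For each fixed $J\neq\emptyset$, the ghost $(1-q^{|\mathbf{m}(J)|}s)$ collects exponent $\sum_{\emptyset\neq I\subseteq J}(-1)^{\#J-\#I}=(-1)^{\#J+1}$ by the binomial identity underlying Lemma~3.1$(1)$, supplying the $I=\emptyset,\,J\neq\emptyset$ terms.

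The main obstacle is not any single computation but the combinatorial bookkeeping in the last step: one must separate the compact product (ranging over all $I\subseteq J$, including $I=\emptyset$) into empty-$I$ and nonempty-$I$ contributions and verify, via both parts of Lemma~3.1, that the ``ghost'' factors $(1-s)$ and $(1-q^{|\mathbf{m}(J)|}s)$ produced by Theorem~3.1 precisely reconstruct the empty-$I$ pieces hidden in my derivation.
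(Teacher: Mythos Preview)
Your proposal is correct and follows essentially the same route as the paper: both derive the intermediate identity
\[
Z_q(s;\sigma_{n_1,m_1},\ldots,\sigma_{n_r,m_r})=\prod_{I\subseteq J\subseteq\Omega}\bigl(1-q^{[\mathbf{n}(I)]|\mathbf{m}(J)|}s^{[\mathbf{n}(I)]}\bigr)^{(-1)^{\#J-\#I+1}}
\]
and then match it against the right-hand side via Theorem~3.1 together with both parts of Lemma~3.1. The only organisational difference is that the paper first partitions $\N=\bigsqcup_{I}E_I$, writes the trace product on $E_I$ as $T^{(1)}_I(q^j)T^{(2)}_{\Omega\setminus I}(q^j)$, expands $T^{(1)}_I$, and then reassembles $\bigcup_{I\supseteq I'}E_I=F_{I'}$; your indicator-function formulation $\mathrm{tr}\beta_{n_i,q}(\sigma_{n_i,m_i}^j)=(1-q^{m_ij})+n_iq^{m_ij}\chi_{A_i}(j)$ collapses these two steps, since $\prod_{i\in I}\chi_{A_i}=\chi_{F_I}$ directly, which is a mild streamlining rather than a genuinely different argument.
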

\begin{proof}
If $j \in E_I$, 
\begin{align*}
\mathrm{tr}(\beta_{n_i, q}(\sigma_{n_i,m_i}^j))=
\begin{cases}
(1-q^{m_i j} +n_iq^{m_i j})&i \in I,\\[0.5em]
(1-q^{m_i j})& i \not \in I.
\end{cases}
\end{align*}
Then we can write
\begin{align*}
\mathrm{tr}(\beta_{n_1,q}(\sigma_{n_1,m_1}^j)\otimes \cdots \otimes \beta_{n_r, q}(\sigma_{n_r, m_r}^j))=\mathrm{tr}(\beta_{n_1,q}(\sigma_{n_1,m_1}^j)) \cdots\mathrm{tr}( \beta_{n_r, q}(\sigma_{n_r, m_r}^j))=T^{(1)}_I(q^j)\cdot T^{(2)}_{\Omega \setminus I}(q^j).
\end{align*}
For $I \subseteq \Omega$, $T^{(1)}_I(q^j)$ and $T^{(2)}_I(q^j)$ are calculated as
\begin{eqnarray}
T^{(1)}_I(q^j)&=\displaystyle{\sum_{I' \subseteq I }}[\mathbf{n}(I')] \cdot q^{ |\mathbf{m}(I')|  j}T^{(2)}_{I'}(q^j),\\[1.0em]
T^{(2)}_I(q^j)&\hspace{-4.2em}=\displaystyle{\sum_{I' \subseteq I }}(-1)^{\#I'}q^{|\mathbf{m}(I')| j}.
\end{eqnarray}
Then we compute
\begin{align*}
Z_q(s; \sigma_{n_1,m_1}, \ldots, \sigma_{n_r,m_r})
&=\exp\biggl\{\sum_{I \subseteq \Omega}\sum_{j \in E_I}\frac{T^{(1)}_I(q^j)\cdot T^{(2)}_{\Omega \setminus I}(q^j)}{j}s^j \biggr\}\\[0.7em]
&=\exp\biggl\{\sum_{I \subseteq \Omega}\sum_{j \in E_I}\sum_{I' \subseteq I } \frac{[\mathbf{n}(I')]\cdot q^{|\mathbf{m}(I')|  j}T^{(2)}_{I \setminus I'}(q^j)\cdot T^{(2)}_{\Omega \setminus I}(q^j)}{j}s^j \biggr\}\\[0.7em]
\end{align*}
\begin{align*}
&=\exp\biggl\{\sum_{I \subseteq \Omega}\sum_{j \in E_I}\sum_{I' \subseteq I } \frac{[\mathbf{n}(I')]\cdot q^{|\mathbf{m}(I')| j}T^{(2)}_{\Omega \setminus I'}(q^j)}{j}s^j \biggr\}\\[0.7em]
&=\exp\biggl\{\sum_{I' \subseteq \Omega }\sum_{I \supseteq I'}\sum_{j \in E_I} \frac{[\mathbf{n}(I')]\cdot q^{ |\mathbf{m}(I')| j}T^{(2)}_{\Omega \setminus I'}(q^j)}{j}s^j \biggr\}\\[0.7em]
&=\exp\biggl\{\sum_{I' \subseteq \Omega }\sum_{j \in F_{I'}} \frac{[ \mathbf{n}(I')]\cdot q^{|\mathbf{m}(I')| j}T^{(2)}_{\Omega \setminus I'}(q^j)}{j}s^j \biggr\}.\\
\end{align*}
Here the third equality follows from $(\Omega \setminus I) \cup (I\setminus I')=\Omega \setminus I'$. Moreover since $F_I=[\mathbf{n}(I)]\cdot \N$, we calculate as
\begin{align*}
Z_q(s; \sigma_{n_1,m_1}, \ldots, \sigma_{n_r,m_r})
&=\exp\biggl\{\sum_{I' \subseteq \Omega}\sum_{k=1}^{\infty} \frac{ q^{|\mathbf{m}(I')| k  [\mathbf{n}(I')]}T^{(2)}_{\Omega \setminus I'}(q^{k[\mathbf{n}(I')]})}{k}s^{k[\mathbf{n}(I')]} \biggr\}\\[1.0em]
&=\exp\biggl\{\sum_{I' \subseteq \Omega }\sum_{k=1}^{\infty}\sum_{I'' \subseteq \Omega \setminus I' } \frac{ (-1)^{\# I''}q^{(|\mathbf{m}(I')|+|\mathbf{m}(I'')|)k|\mathbf{n}(I')|}}{k}s^{k [\mathbf{n}(I')]} \biggr\}\\[1.0em]
&=\exp\biggl\{\sum_{k=1}^{\infty}\sum_{I' \subseteq \Omega }\sum_{I'' \subseteq \Omega \setminus I' } \frac{ (-1)^{\# I''}q^{(|\mathbf{m}(I'\cup I'')|)k[\mathbf{n}(I')]}}{k}s^{k[\mathbf{n}(I')]} \biggr\}.\\
\end{align*}
Here the second equality follows from $(3.4)$.
Then, replacing $I' \cup I''$ to $J$, we have
\begin{align*}
\# I'' =\# J-\#I'.
\end{align*}
Furthermore, we replace $I'$ to $I$, hence we have 
\begin{align*}
Z_q(s; \sigma_{n_1,m_1}, \ldots, \sigma_{n_r,m_r})
&=\exp\biggl\{\sum_{k=1}^{\infty}\sum_{ I\subseteq J\subseteq \Omega}\frac{ (-1)^{\# J-\# I}q^{|\mathbf{m}(J)|k[\mathbf{n}(I)]}}{k}s^{k[\mathbf{n}(I)]} \biggr\}\\[1.0em]
&=\exp\biggl\{\sum_{ I \subseteq J \subseteq \Omega}(-1)^{\# J-\# I}\log(1-q^{|\mathbf{m}(J)|\cdot[\mathbf{n}(I)]}s^{[\mathbf{n}(I)]})^{-1}\biggr\}\\[1.0em]
\end{align*}
\begin{align*}
&=\prod_{ I \subseteq J \subseteq \Omega}\biggl\{ \frac{1}{1-q^{|\mathbf{m}(J)|\cdot[\mathbf{n}(I)]}s^{[\mathbf{n}(I)]}}\biggr\}^{(-1)^{\# J-\# I}}.\\
\end{align*}
On the other hand, by Theorem $3.1$, we have
\begin{align*}
\prod_{\emptyset \neq I \subseteq J \subseteq \Omega}\zeta(s, \sigma_{[\mathbf{n}(I)],|\mathbf{m}(J)|}; \beta_{[\mathbf{n}(I)],q})^{(-1)^{\# J-\# I}}
=\prod_{\emptyset \neq I \subseteq J \subseteq \Omega}\biggl\{ \frac{1-q^{|\mathbf{m}(J)|}s}{(1-s)(1-q^{|\mathbf{m}(J)|\cdot[\mathbf{n}(I)]}s^{[\mathbf{n}(I)]})} \biggr\}^{(-1)^{\# J-\# I}}.\\
\end{align*}
By Lemma $3.1.(2)$, we compute
\begin{align*}\hspace{-2.0em}
\prod_{\emptyset \neq I \subseteq J \subseteq \Omega}\biggl\{\frac{1}{1-s}\biggr\}^{(-1)^{\#J-\#I}}&=\exp\biggl\{ -\log(1-s)\sum_{\emptyset \neq I \subseteq J \subseteq \Omega}(-1)^{\#J-\#I} \biggr\}\\[1.0em]
&=\exp\biggl\{ -\log(1-s)\cdot 1\biggr\}\\[1.0em]
&=\frac{1}{1-s}.\\
\end{align*}
Furthermore,
\begin{align*}
\prod_{\emptyset \neq I \subseteq J \subseteq \Omega}(1-q^{|\mathbf{m}(J)|}s )^{(-1)^{\# J-\# I}}&=\prod_{\emptyset \neq J \subseteq \Omega}\prod_{\emptyset \neq I \subseteq J}(1-q^{|\mathbf{m}(J)|}s )^{(-1)^{\# J-\# I}}\\[1.0em]
&=\prod_{\emptyset \neq J \subseteq \Omega}\exp\biggl\{ \log (1-q^{|\mathbf{m}(J)|}s)\cdot \sum_{\emptyset \neq I \subseteq J}(-1)^{\# J-\# I} \biggr\}\\[1.0em]
&=\prod_{\emptyset \neq J \subseteq \Omega}\exp\biggl\{ \log (1-q^{|\mathbf{m}(J)|}s)\cdot (-1)^{\# J}\bigl\{\sum_{\emptyset \subseteq I \subseteq J}(-1)^{\# I}-(-1)^{\# \emptyset } \bigr\} \biggr\}\\[1.0em]
&=\prod_{\emptyset \neq J \subseteq \Omega}\exp\biggl\{ \log (1-q^{|\mathbf{m}(J)|}s)\cdot (-1)^{\# J}\bigl\{0-1 \bigr\} \biggr\}\\[1.0em]
&= \prod_{\emptyset \neq J \subseteq \Omega}\biggl\{ \frac{1}{1-q^{|\mathbf{m}(J)|}s }\biggr\}^{(-1)^{\# J}}.\\
\end{align*}
Here the forth equality follows from Lemma $3.1.(1)$ and $\# \emptyset =0$.
Hence we have
\begin{align*}
\prod_{\emptyset \neq I \subseteq J \subseteq \Omega}\zeta(s, \sigma_{[\mathbf{n}(I)],|\mathbf{m}(J)|}; \beta_{[\mathbf{n}(I)],q})^{(-1)^{\# J-\# I}}=\prod_{ I \subseteq J \subseteq \Omega}\biggl\{  \frac{1}{1-q^{|\mathbf{m}(J)|\cdot[\mathbf{n}(I)]}s^{[\mathbf{n}(I)]}}\biggr\}^{(-1)^{\# J-\# I}}.
\end{align*}
This completes the proof of Theorem $3.3$.
\end{proof}
By Theorem$1.2$ and Theorem $3.3$, we obtain the following corollary.
\begin{cor}
If the pairs $([\mathbf{n}(I)], |\mathbf{m}(I)|)\hspace{0.2em}$ are all coprime for any $I \subseteq \Omega$, 
and $n_1, \ldots, n_r$ are also coprime, then we have
\begin{align*}
\underset{s=1}{\Res}\hspace{0.2em} Z_q(s; \sigma_{n_1,m_1}, \ldots, \sigma_{n_r, m_r})=-\frac{1}{[n_1\cdots n_r]_q}\prod_{\emptyset \neq I \subseteq J \subseteq \Omega}\Delta_{T([\mathbf{n}(I)], |\mathbf{m}(J)|)}(q)^{(-1)^{\# J-\# I+1}}.
\end{align*}
\end{cor}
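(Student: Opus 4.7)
The plan is to apply Theorem $3.3$ to express $Z_q(s; \sigma_{n_1,m_1}, \ldots, \sigma_{n_r,m_r})$ as a product of single-braid zeta functions with signed exponents, take the residue at $s=1$ using Theorem $1.2\,(2)$ factor by factor, and then collapse the resulting product using Lemma $3.1$. The bulk of the work is combinatorial bookkeeping.

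The first step is to verify that $Z_q(s; \sigma_{n_1,m_1}, \ldots, \sigma_{n_r,m_r})$ has a simple pole at $s=1$. By Theorem $3.1$, each factor $\zeta(s, \sigma_{[\mathbf{n}(I)], |\mathbf{m}(J)|}; \beta_{[\mathbf{n}(I)],q})$ appearing in the formula of Theorem $3.3$ has a simple pole at $s=1$, with residue
\[
c_{I,J} := -\frac{1}{[[\mathbf{n}(I)]]_q}\,\Delta_{T([\mathbf{n}(I)],\,|\mathbf{m}(J)|)}(q)^{-1}
\]
by Theorem $1.2\,(2)$, where $[[\mathbf{n}(I)]]_q$ denotes the $q$-integer of $[\mathbf{n}(I)] = \prod_{i\in I} n_i$. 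Hence near $s=1$ each factor behaves like $c_{I,J}/(s-1)$, and the order of the pole of $Z_q$ at $s=1$ equals
\[
\sum_{\emptyset \neq I \subseteq J \subseteq \Omega}(-1)^{\#J-\#I} = 1
\]
by Lemma $3.1\,(2)$. Consequently the pole is simple, and the residue is the product of the leading coefficients raised to their signed exponents:
\[
\underset{s=1}{\Res}\, Z_q(s; \sigma_{n_1,m_1}, \ldots, \sigma_{n_r,m_r}) = \prod_{\emptyset \neq I \subseteq J \subseteq \Omega} c_{I,J}^{(-1)^{\#J-\#I}}.
\]

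I would then substitute the expression for $c_{I,J}$ and split the resulting product into three pieces. The sign contribution is $(-1)^{\sum (-1)^{\#J - \#I}} = -1$ by Lemma $3.1\,(2)$ again. The $q$-integer contribution is $\prod_{\emptyset \neq I \subseteq \Omega} [[\mathbf{n}(I)]]_q^{-\sum_{I \subseteq J \subseteq \Omega}(-1)^{\#J-\#I}}$, which depends only on $I$ once the inner sum over $J$ is carried out; by Lemma $3.1\,(1)$ this inner exponent equals $-\delta_{I,\Omega}$, so only the term $I = \Omega$ survives and it contributes $[n_1\cdots n_r]_q^{-1}$. The Alexander polynomial contribution is $\prod_{\emptyset \neq I \subseteq J \subseteq \Omega} \Delta_{T([\mathbf{n}(I)],\,|\mathbf{m}(J)|)}(q)^{(-1)^{\#J-\#I+1}}$, which matches the claimed formula exactly.

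No step is genuinely difficult; the main obstacle is organizational. One must separate those parts of $c_{I,J}$ that depend only on $I$ (the $q$-integer) from those that depend on the pair $(I,J)$ (the Alexander polynomial and the overall sign), so that Lemma $3.1\,(1)$ can be applied to the inner summation for the $q$-integer, while Lemma $3.1\,(2)$ handles the sign and confirms simultaneously that the pole is simple. After this separation, the computation is a direct substitution.
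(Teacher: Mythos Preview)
Your proposal is correct and follows essentially the same route as the paper: the paper invokes Theorem~1.2 and Theorem~3.3 and then remarks that the only thing left to check is the $q$-integer contribution, which it simplifies to $[n_1\cdots n_r]_q^{-1}$ via Lemma~3.1(1), exactly as you do. Your write-up is in fact more complete than the paper's, since you also verify explicitly (via Lemma~3.1(2)) that the pole at $s=1$ is simple and that the sign contribution is $-1$, points the paper leaves implicit.
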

\begin{proof}
It is sufficient to compute the number of $[\mathbf{n}(I)]$.
\begin{align*}
\prod_{\emptyset \neq I \subseteq J \subseteq \Omega}\biggl\{\frac{1}{[[\mathbf{n}(I)]]_q}\biggr\}^{(-1)^{\# J-\# I}}
&=\prod_{\emptyset \neq I \subseteq \Omega}\exp \bigl\{ -\log([[\mathbf{n}(I)]]_q)\sum_{I \subseteq J \subseteq \Omega }(-1)^{\# J-\# I}\bigr\}\\[0.5em]
&=\prod_{\emptyset \neq I \subseteq \Omega}\exp \bigl\{ -\log([[\mathbf{n}(I)]]_q)\delta_{I, \Omega}\bigr\}\\[0.5em]
&=\frac{1}{[n_1\cdots n_r]_q}.
\end{align*}
Here the second equality follows from Lemma $3.1.(1)$. This proves Corollary $3.6$.
\end{proof}
\begin{ex}\normalfont
We give an example of the case $r = 2$.
We choose $n_1, n_2 \in \N_{\geq 2} , m_1, m_2 \in \Z$ such that $(n_1,n_2), (n_1, m_1), (n_2, m_2 )$ and $(n_1n_2, m_1+m_2)$ are all coprime. Then we compute 
\begin{align*}
\mathrm{tr}((\beta_{n_1,q}\sigma_{n_1, m_1})\otimes\beta_{n_2, q}(\sigma_{n_2, m_2}))^j)
&=\mathrm{tr}\beta_{n_1,q}(\sigma_{n_1,m_1}^j) \mathrm{tr}\beta_{n_2,q}(\sigma_{n_2,m_2}^j)\\[0.5em]
&=
\begin{cases}
(1-q^{m_1j})(1-q^{m_2j})
& j \in E_{\Set{\emptyset}},\\[0.5em]
(1-q^{m_1j}+n_1q^{m_1j})(1-q^{m_2j})
& j \in E_{\Set{1}},\\[0.5em]
(1-q^{m_1j})(1-q^{m_2j}+n_2q^{m_2j})
& j \in E_{\Set{2}},\\[0.5em]
(1-q^{m_1j}+n_1q^{m_1j})(1-q^{m_2j}+n_2q^{m_2j})
& j \in E_{\Set{1,2}}.
\end{cases}
\end{align*}
\begin{figure}[!h]
\begin{center}
\includegraphics[,width=6.0cm]{./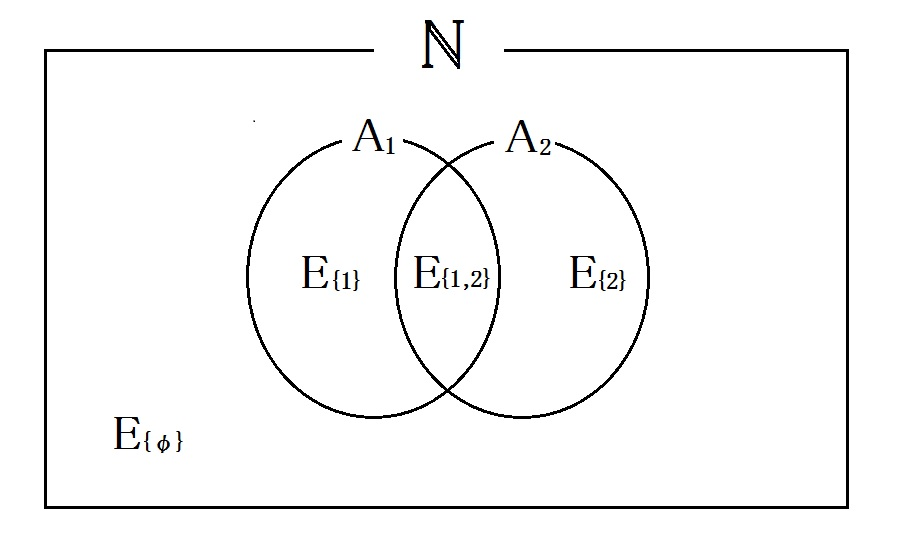} 
\end{center}
\end{figure}\\
Hence,
\begin{align*}
Z_q(s; \sigma_{n_1, m_1}, \sigma_{n_2, m_2})
&=\exp\biggl\{ \bigl\{\sum_{j \in E_{\Set{\emptyset}}}+\sum_{ j \in E_{\Set{1}}}+\sum_{ j \in E_{\Set{2}}}+\sum_{ j \in E_{\Set{1,2}}}\bigr\}\frac{(1-q^{m_1j})(1-q^{m_2j})}{j}s^j\\[0.5em]
&\hspace{1.0em}+\bigl\{\sum_{ j \in E_{\Set{1}}}+\sum_{ j \in E_{\Set{1,2}}}\bigr\}\frac{n_1q^{m_1j}(1-q^{m_2j})}{j}s^{j}+\bigl\{\sum_{ j \in E_{\Set{2}}}+\sum_{ j \in E_{\Set{1,2}}}\bigr\}\frac{n_2q^{m_2j}(1-q^{m_1j})}{j}s^j\\[0.5em]
&\hspace{1.0em}
+\sum_{ j \in E_{\Set{1,2}}}\frac{n_1n_2q^{(m_1+m_2)j}}{j}s^{j}\biggr\}\\[0.5em]
\end{align*}
\begin{align*}
&=\exp\biggl\{ \sum_{j \in F_{\Set{\emptyset}}}\frac{(1-q^{m_1j})(1-q^{m_2j})}{j}s^j
+\sum_{ j \in F_{\Set{1}}}\frac{n_1q^{m_1j}(1-q^{m_2j})}{j}s^{j}\\[0.5em]
&\hspace{7.0em}+\sum_{ j \in F_{\Set{2}}}\frac{n_2q^{m_2j}(1-q^{m_1j})}{j}s^j
+\sum_{ j \in F_{\Set{1,2}}}\frac{n_1n_2q^{(m_1+m_2)j}}{j}s^{j}\biggr\}\\[0.5em]
&=\exp\biggl\{ \sum_{j=1}^{\infty}\frac{(1-q^{m_1j})(1-q^{m_2j})}{j}s^j+\sum_{k=1}^{\infty}\frac{n_1q^{m_1kn_1}(1-q^{m_2kn_1})}{kn_1}s^{kn_1}\\[0.5em]
&\hspace{7.0em}+\sum_{k'=1}^{\infty}\frac{n_2q^{m_2k'n_2}(1-q^{m_1k'n_2})}{k'n_2}s^{k'n_2}+\sum_{l=1}^{\infty}\frac{n_1n_2q^{(m_1+m_2)ln_1n_2}}{ln_1n_2}s^{ln_1n_2}\biggr\}\\[0.5em]
&=\exp\biggl\{ \sum_{j=1}^{\infty}\frac{1-q^{m_1j}-q^{m_2j}+q^{(m_1+m_2)j}}{j}s^j+\sum_{k=1}^{\infty}\frac{q^{m_1kn_1}-q^{(m_1+m_2)kn_1}}{k}s^{kn_1}\\[0.5em]
&\hspace{7.0em}+\sum_{k'=1}^{\infty}\frac{q^{m_2k'n_2}-q^{(m_1+m_2)k'n_2}}{k'}s^{k'n_2}+\sum_{l=1}^{\infty}\frac{q^{(m_1+m_2)ln_1n_2}}{l}s^{ln_1n_2}\biggr\}\\[0.5em]
&=\frac{(1-q^{m_1}s)(1-q^{m_2}s)(1-q^{(m_1+m_2)n_1}s^{n_1})(1-q^{(m_1+m_2)n_2}s^{n_2})}{(1-s)(1-q^{m_1+m_2}s)(1-q^{m_1n_1}s^{n_1})(1-q^{m_2n_2}s^{n_2})(1-q^{(m_1+m_2)n_1n_2}s^{n_1n_2})}\\[1.5em]
&=\frac{\zeta(s, \sigma_{n_1, m_1}; \beta_{n_1, q})\zeta(s,\sigma_{n_2, m_2}; \beta_{n_2, q})\zeta(s, \sigma_{n_1n_2, m_1+m_2}; \beta_{n_1n_2, q})}{\zeta(s, \sigma_{n_1, m_1+m_2}; \beta_{n_1, q})\zeta(s, \sigma_{n_2, m_1+m_2}; \beta_{n_2, q})}.
\end{align*}
Then we have
\begin{align*}
\underset{s=1}{\Res}\hspace{0.2em} Z_q(s; \sigma_{n_1, m_1}, \sigma_{n_2, m_2})=-\frac{\Delta_{T(n_1, m_1+m_2)}(q)\Delta_{T(n_2, m_1+m_2)}(q)}{[n_1 n_2]_q \Delta_{T(n_1, m_1)}(q)\Delta_{T(n_2, m_2)}(q)\Delta_{T(n_1n_2, m_1+m_2)}(q)}.\\
\end{align*}
\end{ex}

\medskip
\begin{flushleft}
Kentaro Okamoto\\
Department of Mathematics\\
Kyushu University\\
Nishi-ku, Fukuoka 819-0395\\
Japan\\
(k-okamoto@math.kyushu-u.ac.jp)
\end{flushleft}

\end{document}